\documentclass[12pt]{amsart}
\usepackage{amsmath,amssymb,hyperref}

\newcommand{\bC}{\mathbb{C}}
\newcommand{\bF}{{\mathbb F}}

\newcommand{\bP}{\mathbb{P}}

\newcommand{\bR}{\mathbb{R}}
\newcommand{\bS}{\mathbb{S}}
\newcommand{\bZ}{\mathbb{Z}}

\newcommand{\cE}{\mathcal{E}}
\newcommand{\cF}{\mathcal{F}}
\newcommand{\cI}{\mathcal{I}}

\newcommand{\cN}{{\mathcal N}}
\newcommand{\cO}{\mathcal{O}}
\newcommand{\cQ}{\mathcal{Q}}
\newcommand{\cT}{{\mathcal T}}
\newcommand{\cX}{\mathcal{X}}

\newcommand{\ra}{\rightarrow}

\theoremstyle{plain}
\newtheorem{prop}{Proposition}[section]
\newtheorem{theo}[prop]{Theorem}
\newtheorem{coro}[prop]{Corollary}

\theoremstyle{remark}
\newtheorem{rema}[prop]{Remark}
\newtheorem{ques}[prop]{Question}

\theoremstyle{definition}

\newtheorem{exam}[prop]{Example}
\numberwithin{equation}{section}

\newcommand{\Bl}{\operatorname{Bl}}
\newcommand{\OGr}{\operatorname{OGr}}

\DeclareMathOperator{\Br}{Br}

\newcommand{\diag}{\operatorname{diag}}

\DeclareMathOperator{\Pic}{Pic}
\DeclareMathOperator{\CH}{CH}

\DeclareMathOperator{\Sym}{Sym}

\author{Brendan Hassett}
\address{Department of Mathematics\\
Brown University \\
Box 1917 
151 Thayer Street
Providence, RI 02912 \\
USA}
\email{brendan\underline{ }hassett@brown.edu}

\author{J\'anos Koll\'ar}
\address{Department of Mathematics\\
Princeton University\\
Fine Hall, Washington Road \\
Princeton NJ 08544-1000\\
USA}
\email{kollar@math.princeton.edu}

\author{Yuri Tschinkel}
\address{Courant Institute\\
                New York University \\
                New York, NY 10012 \\
                USA }
\address{Simons Foundation\\
160 Fifth Avenue\\
New York, NY 10010\\
USA}

\email{tschinkel@cims.nyu.edu}

\title[Real intersections of quadrics]{Rationality of  even-dimensional intersections of two real quadrics}

\begin{document}

\begin{abstract}
We study rationality constructions for smooth complete intersections of two quadrics over nonclosed fields.
Over the real numbers, we establish a criterion for rationality in dimension four. 
\end{abstract}

\maketitle

\section{Introduction}
\label{sect:intro}

Consider a geometrically rational variety $X$, smooth and
projective over a field $k$. 
Is $X$ rational over $k$? A necessary condition is that $X(k)\neq \emptyset$,
which is sufficient in dimension one, as well as for quadric hypersurfaces
and Brauer-Severi varieties of arbitrary dimension. 
When the dimension of $X$ is at most two, rationality over $k$ 
was settled by work of Enriques, Manin, and Iskovksikh \cite{Isk}.
Rationality is encoded in the Galois action on the geometric 
N\'eron-Severi group -- varieties with rational points that are
`minimal' in the sense of birational geometry need not be 
rational. In dimension three, 
recent work \cite{HT1,HT2,KP1,BenWit1,BenWit2}
has clarified the criteria for rationality: one also needs to take
into account principal homogeneous spaces over the intermediate 
Jacobian, reflecting which curve classes are realized over the ground
field. 
The case of complete intersections of two quadrics was an
important first step in understanding the overall structure
\cite{HTplusCT}; rationality in dimension three
is equivalent to the existence of a line over $k$ \cite{HT1,BenWit2,KP1}.

These developments stimulate investigations in higher dimensions \cite{KP2};
the examples considered are rational provided there are rational points.
In this paper, we focus on the case of four-dimensional complete
intersections of two quadrics, especially over the real numbers $\bR$.
Here we exhibit rational examples without lines and explore further
rationality constructions. 

\begin{theo}
\label{theo:main}
A smooth complete intersection of two quadrics $X\subset \bP^6$ over $\bR$ is
rational if and only if $X(\bR)$ is nonempty and connected.  
\end{theo}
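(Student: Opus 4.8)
The plan is to prove the two implications separately, the forward one being essentially formal. If $X$ is rational over $\bR$, then $X(\bR)\neq\emptyset$, since $\bP^4(\bR)$ is Zariski dense and a birational map $\bP^4\dashrightarrow X$ carries real points of its domain to $X(\bR)$. For connectedness I would invoke the fact that, for smooth projective $\bR$-varieties, the number of connected components of the real locus is a birational invariant: by weak factorization any birational map between smooth projective $\bR$-varieties factors into blow-ups and blow-downs along smooth centers, and such a blow-up preserves the number of connected components of the real locus (the center has real codimension at least two, so removing it does not disconnect, while the real exceptional divisor is a projective bundle that is glued back in). As $\bP^4(\bR)$ is connected, so is $X(\bR)$.

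For the converse I would produce an explicit birational model. If $X$ contains a line $\ell$ over $\bR$, then projection from $\ell$ is birational onto $\bP^4$: a general plane through $\ell$ meets each quadric in $\ell$ together with a residual line, the two residual lines meet in one point, and so $X\cap\bP^2=\ell\cup\{\mathrm{pt}\}$ and $X\dashrightarrow\bP^4$ has degree one. In the absence of a real line I would instead use a smooth conic $C\subset X$ over $\bR$ and project from the plane $\langle C\rangle$ it spans. For a general $\bP^3\supset\langle C\rangle$ the intersection $X\cap\bP^3$ is the union of $C$ with a residual conic, and each residual conic is contracted to a point by the projection; this realizes $X$ as birational to a conic bundle $Y\to\bP^3$ over $\bR$, and it is exactly this construction that should yield the rational examples \emph{without} lines. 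Establishing that such a real conic exists whenever $X(\bR)\neq\emptyset$—via a dimension count on the Fano scheme of conics, which parametrizes planes on which the two quadrics restrict to proportional conics—is the first technical point I would need to settle.

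The heart of the argument is then a rationality criterion for the conic bundle $Y\to\bP^3$ over $\bR$. Because $\bP^3$ is rational, $Y$ is rational precisely when its generic fiber, a conic over $K=\bR(\bP^3)$, is $K$-rational, that is, when the associated quaternion class in $\Br(K)[2]$ is trivial. Over $\bR$ this obstruction is governed by signs: on each connected component of the complement of the real discriminant surface in $\bP^3(\bR)$ the fiber conic is either anisotropic, with empty real locus, or split, and the same sign pattern controls the topology of $Y(\bR)\cong X(\bR)$. I expect the main obstacle to be proving that connectedness of $X(\bR)$ forces the conic to be split over a region whose closure is all of $\bP^3(\bR)$—equivalently, that an anisotropic region would cut out an extra connected component of $X(\bR)$—so that the Brauer class vanishes and the bundle acquires a rational section. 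Translating the topological hypothesis into this sign condition, using the simultaneous normal form of the real pencil and the configuration of real versus complex-conjugate roots of the degree-seven discriminant $\det(\lambda Q_0+\mu Q_1)$, is where the genuine work lies; the empty and disconnected cases then correspond exactly to the obstructed ones, closing the equivalence.
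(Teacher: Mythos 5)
The forward implication is fine, and the observation that a real line gives rationality by projection agrees with the paper. The converse, however, rests on a construction that cannot work. If $C\subset X$ is a conic spanning a plane $\Pi$, then for a general $\bP^3\supset\Pi$ the residual curve $C'$ in $X\cap\bP^3=C\cup C'$ is a conic meeting $C$ in \emph{two} points (the union is a $(2,2)$-curve of arithmetic genus one). Hence on $\Bl_C(X)$, whose Picard group is $\bZ h\oplus\bZ E$ by Lefschetz, the fiber class $f$ of your conic bundle $\Bl_C(X)\to\bP^3$ satisfies $h\cdot f=E\cdot f=2$, so \emph{every} divisor class meets $f$ with even multiplicity. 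The bundle therefore has no rational section even over $\bC$: its generic fiber is a nonsplit conic over $\bC(\bP^3)$, and a fortiori the quaternion class in $\Br(\bR(\bP^3))$ you hope to kill is never zero. The plan of ``showing the Brauer class vanishes so that the bundle acquires a section'' is thus not a gap to be filled but an impossibility. Even setting this aside, the asserted sign criterion is unavailable: Witt's local--global principle for isotropy at real places holds over function fields of real \emph{curves} (this is exactly what the paper uses, for the pencil $\cQ\to\bP^1$, via Amer's theorem), not over $\bR(\bP^3)$, where locally trivial quaternion classes need not be trivial. Finally, the existence of a conic defined over $\bR$ is left unproved, and the claim that $Y$ is rational ``precisely when'' its generic fiber is split is false in the unneeded direction.

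For comparison, the paper's route is quite different: Krasnov's isotopy classification together with Amer's theorem shows that the only classes without a real line and with $X(\bR)$ nonempty and connected are $(5)$, $(1,3,3)$, $(1,2,4)$. These are handled by projecting from a real point to a cubic fourfold and applying the third-point construction to a pair of conjugate disjoint planes (class $(1,2,4)$) or conjugate quadric surfaces meeting transversally at one real point (class $(5)$), and, for $(1,3,3)$, by fibering $\Bl_Q(X)$ in quadric surfaces over $\bP^2$ using a real quadric surface $Q\subset X$ and producing an odd-degree multisection from a second one. The half-dimensional subvarieties (surfaces), not conics, are what make the parity work out; any salvage of your approach would have to replace the conic by a $2$-dimensional quadric or plane for exactly this reason.
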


In general, a projective variety $X$ that is rational over $\bR$ has connected 
nonempty real locus $X(\bR)$. The point of Theorem~\ref{theo:main} is
that this necessary condition is also sufficient.

\begin{coro}
A smooth complete intersection of two quadrics
$X\subset \bP^6$ is rational over $\bR$ if and only if
there exists a unirational parametrization
$\bP^4 \dashrightarrow X$, defined 
over $\bR$, of odd degree.
\end{coro}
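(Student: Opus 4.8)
The reverse implication is immediate: if $X$ is rational over $\bR$, then a birational isomorphism $\bP^4 \dashrightarrow X$ is a unirational parametrization of degree one, which is odd. For the forward implication my plan is to invoke Theorem~\ref{theo:main} and reduce matters to showing that the existence of an odd-degree unirational parametrization forces $X(\bR)$ to be nonempty and connected. So suppose $\phi\colon \bP^4 \dashrightarrow X$ is defined over $\bR$, dominant, and of odd degree $d$. Nonemptiness is quick: $\phi$ is defined on a dense Zariski-open $U\subseteq \bP^4$, and $U(\bR)$ is dense in the nonempty real projective space $\bP^4(\bR)$, so $\phi(U(\bR))\subseteq X(\bR)$ is nonempty.

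For connectedness I would first pass to an honest morphism. Resolving the indeterminacy of $\phi$ over $\bR$ by blowing up smooth Galois-stable centers yields a smooth projective $\widetilde Y$ with a birational morphism $\widetilde Y\to \bP^4$ and a morphism $\widetilde\phi\colon \widetilde Y\to X$ of degree $d$. Blowing up a smooth real center replaces a piece of the real locus by a bundle of real projective spaces over it, and the real locus of the blowup surjects onto that of the base; hence connectivity of the real locus is preserved at each step. Since $\bP^4(\bR)$ is connected, so is $\widetilde Y(\bR)$, and therefore the image $\widetilde\phi(\widetilde Y(\bR))$, being the continuous image of a connected set, is connected and lies inside a single connected component of $X(\bR)$.

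The heart of the argument, and the step where oddness of $d$ enters, is to show that this image meets \emph{every} connected component $C$ of $X(\bR)$. Let $B\subsetneq X$ be the proper closed locus over which $\widetilde\phi$ fails to be finite \'etale of degree $d$. For a real point $x\in X(\bR)\setminus B(\bR)$, the geometric fiber consists of $d$ distinct points permuted by complex conjugation; the non-real points pair up, so the number of real points in the fiber is congruent to $d\bmod 2$, hence odd and in particular nonzero. Since $X$ is smooth of dimension four, each nonempty component $C$ is a $4$-manifold, while $B(\bR)$ has dimension at most three; thus $C\setminus B(\bR)$ is dense and nonempty in $C$, and any such point has a real preimage in $\widetilde Y(\bR)$. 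Therefore $\widetilde\phi(\widetilde Y(\bR))$ meets $C$. A connected set meeting every component of $X(\bR)$ forces there to be a single component, so $X(\bR)$ is connected, and Theorem~\ref{theo:main} delivers rationality.

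I expect the main obstacle to lie in the bookkeeping of the parity step: one must verify that the bad locus $B$ is genuinely of dimension at most three, so that its real points cannot fill up a four-dimensional component of $X(\bR)$, and that the generic real fiber is reduced of size $d$, so that the conjugation-parity count is valid. The claim that resolving indeterminacy over $\bR$ preserves connectivity of the real locus is routine but deserves explicit justification, since it is exactly what transports the connectivity of $\bP^4(\bR)$ to $\widetilde Y(\bR)$.
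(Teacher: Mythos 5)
Your proof is correct and takes essentially the same route as the paper: the paper's one-sentence argument is precisely that odd-degree rational maps are surjective on real points (the conjugation-parity count on fibers that you spell out), which forces $X(\bR)$ to be nonempty and connected so that Theorem~\ref{theo:main} applies, while the converse is the degree-one parametrization. You have simply written out in detail the standard facts the paper leaves implicit.
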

Indeed, odd degree rational maps are
surjective on real points, which guarantees that $X(\bR)$ is 
connected. Smooth complete intersections of two quadrics, of dimension at least two, are
unirational provided they have a rational point; see Section~\ref{subsect:genrat}
for references and discussion.

We also characterize rationality in dimension six, with
the exception of one isotopy class that remains open 
(see Section~\ref{subsect:135}).

\

Here is the roadmap of this paper. In Section~\ref{sect:back} we recall basic facts about quadrics in even-dimensional projective spaces and their intersections. All interesting cohomology is spanned by the classes of projective subspaces in $X$ of half-dimension, and the Galois group
acts on these classes via symmetries of the primitive part of this cohomology, a lattice for the root system $D_{2n+3}$. 
In Section~\ref{sect:rat-con} we present several rationality constructions. 
The isotopy classification, using Krasnov's invariants, is presented in Section~\ref{sect:isotopy}; we draw
connections with the Weyl group actions. 
In Section~\ref{sect:rev} we focus attention on cases where rationality is
not obvious, e.g., due to the presence of a line.
In Section~\ref{sect:application} we prove Theorem~\ref{theo:main} and discuss the applicability and limitations 
of our constructions in dimensions four and six.
We speculate on possible extensions to more general fields in Section~\ref{sect:extensions}.

\medskip
\noindent
\textbf{Acknowledgments:}
The first author was partially supported by NSF grant 1701659 and Simons Foundation Award 546235, 
the second author by the NSF grant DMS-1901855, and the 
third author by NSF grant 2000099.

\section{Geometric background}
\label{sect:back}

\subsection{Roots and weights}

Let $D_{2n+3}$ be the root lattice of the corresponding 
Dynkin diagram, expressed in the standard Euclidean lattice
$$\left<L_1,\ldots,L_{2n+3}\right>, \quad \quad  L_i\cdot L_j = \delta_{ij}$$
as the lattice generated by simple roots
\begin{align*}
R_1=L_1-L_2,R_2=L_2-L_3, &\ldots,R_{2n+1}=L_{2n+1}-L_{2n+2}, \\
R_{2n+2}=L_{2n+2}-L_{2n+3}, \quad &R_{2n+3}=L_{2n+2}+L_{2n+3}.
\end{align*}
Its discriminant group is cyclic of order four, generated by
$$\tfrac{1}{4}
(2(R_1+2R_2+\cdots+(2n+1)R_{2n+1})+(2n+1)R_{2n+2}+(2n+3)R_{2n+3}).$$
Multiplication by $-1$ acts on the discriminant via $\pm 1$.
The outer automorphisms of $D_{2n+3}$ also act via automorphisms
of $D_{2n+3}$ acting on the discriminant via $\pm 1$, e.g.,
exchanging $R_{2n+2}$ and $R_{2n+3}$ and keeping the other roots
fixed.

The Weyl group $W(D_{2n+3})$ acts in the
basis $\{L_i\}$ via signed permutations with an even number of $-1$ entries.
The outer automorphisms act via signed permutations with no
constraints on the choice of signs, e.g.,
$$L_i \mapsto L_i,\quad  i=1,\ldots,2n+2, \quad \quad L_{2n+3} \mapsto -L_{2n+3}.$$

The odd and even half-spin representations have weights indexed by
subsets $I \subset \{1,2,\ldots,2n+3 \}$, with $|I|$ odd or even,
written
$$w_I= \tfrac{1}{2} (\sum_{i\in I} L_i - \sum_{j \in I^c}L_j).$$
The odd and even weights are exchanged by outer automorphisms.

\subsection{Planes}
\label{subsect:planes}
In this section, we assume that the ground field is algebraically closed of characteristic zero. 

Let $X\subset \bP^{2n+2}$ be a smooth complete intersection of two quadrics.
We will identify subvarieties in $X$ with their classes in the cohomology of $X$ when no confusion may arise.

Let $h$ denote the hyperplane section and consider the
primitive cohomology of $X$ under the intersection pairing.
Reid \cite[3.14]{ReidThesis} shows that
$$(h^n)^{\perp}\simeq (-1)^n D_{2n+3}.$$
In other words, the primitive sublattice of $H^{2n}(X,\bZ(n))$
-- the Tate twist of singular cohomology for the underlying
complex variety -- may be
identified with the root lattice.   
This is the target of the cycle class map 
$$\CH^n(X) \ra H^{2n}(X,\bZ(n))$$
so the sign convention is natural.

\begin{rema}[Caveat on signs]
When $X$ is defined over $\bR$, codimension-$n$ 
subvarieties $Z\subset X$ defined over
$\bR$ yield classes in $H^{2n}(X,\bZ(n))$ that are invariant
under complex conjugation. However, the corresponding classes
in $H^{2n}(X,\bZ)$ are multiplied by $(-1)^n$. When we mention
invariant classes, it is with respect to the former 
action. 
\end{rema}

Given a plane $P\simeq \bP^n \subset X$, we have 
$$
(P\cdot P)_X=c_n
$$
where  \cite[3.11]{ReidThesis}
$$c_0=1, c_1=-1, c_2=2, c_3=-2, \ldots,
c_n=(-1)^n(\lfloor \tfrac{n}{2}\rfloor + 1).$$
The projection of $P$ into rational primitive cohomology takes
the form
$$P-\tfrac{1}{4}h^n$$
which has self-intersection
$c_n-1/4$. The corresponding element $w_P \in D_{2n+3}$
has 
$$w_P\cdot w_P = 
(2n+3)/4.$$
By \cite[Cor.~3.9]{ReidThesis}, we obtain bijections 
$$\{w_P\}_{P\simeq \bP^n \subset X } 
= \{w_I\}_{|I| \text{ has fixed parity }}.$$
Note that the residual intersections to $\bP^n \subset X$
$$X\cap \bP^{n+2} = \bP^n \cup S$$
give cubic scrolls $S\subset X$; these realize the weights of 
opposite parity.

By \cite[Th.~3.8]{ReidThesis}, two planes $P_1$ and $P_2$ are disjoint if and only if
$$
w_{P_1}\cdot w_{P_2}=(-1)^{n+1}/4.
$$ 
For example, if $n=1$ and 
$w_{P_1}$ is identified with $(L_1-L_2-L_3-L_4-L_5)/2$
then the relevant weights are 
$$(L_1+L_2+L_3-L_4-L_5)/2,  \quad \ldots ,  \quad (L_1-L_2-L_3+L_4+L_5)/2$$
and 
$$(-L_1+L_2+L_3+L_4-L_5)/2, \quad \ldots ,  \quad (-L_1-L_2+L_3+L_4+L_5)/2,$$
a total of $10=\binom{5}{2}$ such lines.
When $n=2$ and $w_{P_1}$ is identified with $(L_1-L_2-\cdots-L_7)/2$
then the relevant weights are
$$(L_1+L_2+L_3+L_4+L_5-L_6-L_7)/2,\ldots,(L_1-L_2-L_3+L_4+L_5+L_6+L_7)/2$$
and
$$(-L_1+L_2+L_3+L_4-L_5-L_6-L_7)/2,\ldots,(-L_1-L_2-L_3-L_4+L_5+L_6+L_7)/2,$$
a total of $\binom{6}{4}+\binom{6}{3}=35=\binom{7}{3}$ planes.
The planes $P_1$ and $P_2$ meet at a point if and only if
$$w_{P_1}\cdot w_{P_2}=(-1)^n \tfrac{3}{4}.$$
If they meet along an $r$-plane then an excess intersection
computation gives \cite[3.10]{ReidThesis}
$$P_1\cdot P_2 = (-1)^r (\lfloor \tfrac{r}{2} \rfloor)+1)$$
and 
$$w_{P_1}\cdot w_{P_2}=(-1)^{r+n}(\lfloor \tfrac{r}{2} \rfloor + 1) - (-1)^n\tfrac{1}{4}.$$

In particular, they meet along an $(n-1)$-plane when
$$w_{P_1}\cdot w_{P_2}=-(\lfloor \tfrac{n-1}{2} \rfloor +1 ) - (-1)^n\tfrac{1}{4};$$
for a fixed $w_{P_1}$ there are $2n+3$ planes $P_2$ meeting $P_1$ in this way.
For example, if $n=1$ and 
$w_{P_1}=(L_1-L_2-L_3-L_4-L_5)/2$ then the possibilities for $w_{P_2}$ are
\begin{align*}
(L_1+L_2+L_3+L_4+L_5)/2,  \quad (-L_1-L_2+L_3+L_4+L_5)/2,\\
 (-L_1+L_2-L_3+L_4+L_5)/2,  \quad
(-L_1+L_2+L_3-L_4+L_5)/2, \\
(-L_1+L_2+L_3+L_4-L_5)/2.
\end{align*}

\subsection{Quadrics}
\label{subsect:quadrics}
We retain the notation of Section~\ref{subsect:planes}.

Our next task is to analyze quadric $n$-folds $Q\subset X$, i.e.,
$Q$ a degree-two hypersurface in $\bP^{n+1}$.
Let $\{\cQ_t \}, t\in \bP^1$ denote the pencil of quadric hypersurfaces
cutting out $X$. The degeneracy locus
$$D:= \{t \in \bP^1: \cQ_t \text{ singular } \}$$
consists of $2n+3$ points; since $X$ is smooth, each has rank $2n+2$. 
The Hilbert scheme of quadric $n$-folds $Q\subset X$ is isomorphic to
the relative Fano variety of $(n+1)$-planes
$$\cF(\cQ/\bP^1) = \{ \Pi\simeq \bP^{n+1} \subset \cQ_t \text{ for some }
t \in \bP^1 \},$$
which consists of $2(2n+3)$ copies of the connected isotropic Grassmannian
$\OGr(n+1,2n+2)$. Given a quadric $Q$, its projection to rational 
primitive cohomology
$$Q - \tfrac{1}{2}h^n$$
corresponds to an element
$$w_Q \in D_{2n+3}, \quad w_Q \in \{\pm L_1,\ldots,\pm L_{2n+3}\}.$$
In particular, we have
$$Q\cdot Q = 
\begin{cases} 2 &      \text{ if $n$ even} \\
		    0 & \text{ if $n$ odd.}
		\end{cases}
$$
Residuation in a complete intersection of linear forms
$$Q \cup Q' = X \cap h^n$$
reverses signs, i.e., $w_Q = - w_{Q'}$.
On the other hand, if $Q_1$ and $Q_2$ are not residual then
\begin{equation} \label{notresidual}
Q_1\cdot Q_2 = 1.
\end{equation}

We summarize this as follows:
\begin{prop} \label{prop:permquad}
The signed permutation representation of $W(D_{2n+3})$ is realized
via the action on classes $[Q]$, where $Q\subset X$ is a 
quadric $n$-fold.
\end{prop}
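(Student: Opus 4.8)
The plan is to reduce the statement to the explicit identification of quadric classes with the standard vectors $\pm L_i$, after which the Weyl group action becomes the tautological signed permutation action on this set.

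First I would record the bijection
$$\{[Q]\}_{Q \text{ a quadric } n\text{-fold}} \xrightarrow{\ \sim\ } \{\pm L_1,\ldots,\pm L_{2n+3}\}, \qquad [Q] \mapsto w_Q = Q - \tfrac{1}{2}h^n.$$
The count $2(2n+3)$ of connected components of the relative Fano variety $\cF(\cQ/\bP^1)$ — each component being an isotropic Grassmannian $\OGr(n+1,2n+2)$ whose members are mutually homologous, hence contributing a single class — matches the $2(2n+3)$ vectors $\pm L_i$. I would then confirm that the bijection is compatible with the intersection form: the self-intersection of $w_Q$ corresponds to $L_i\cdot L_i = 1$, residual pairs $w_Q = -w_{Q'}$ correspond to the antipodal pairs $\{L_i,-L_i\}$, and the non-residual relation $Q_1\cdot Q_2 = 1$ of \eqref{notresidual} translates, after subtracting the $h^n$-contributions, into $w_{Q_1}\cdot w_{Q_2} = 0$, i.e. orthogonality of distinct $L_i, L_j$. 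This step carries the geometric content and is already supplied by Reid's computations recalled above; the only care needed is tracking the sign twist $(-1)^n$ in $(h^n)^{\perp}\cong (-1)^n D_{2n+3}$ when comparing geometric intersection numbers with the Euclidean pairing.

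With the bijection in hand, I would invoke the description of the Weyl group action from the first subsection: $W(D_{2n+3})$ acts on $D_{2n+3}\otimes\bQ = \langle L_1,\ldots,L_{2n+3}\rangle_{\bQ}$ by signed permutations with an even number of sign changes. Such transformations preserve the set $\{\pm L_i\}$ and act on it precisely as signed permutations; transporting this through the bijection shows that $W(D_{2n+3})$ permutes the quadric classes $[Q]$, and that the induced linear action on their span — which is all of the rational primitive cohomology, since the $L_i$ form a basis — is the defining signed permutation representation. As $W(D_{2n+3})$ is by definition the group of all even-sign signed permutations of the $L_i$, the entire representation is realized this way.

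The only genuine obstacle is the intersection-form bookkeeping in the first step, and in particular the verification that $[Q]\mapsto w_Q$ is a bijection rather than merely a map into $\{\pm L_i\}$; this requires knowing both that quadrics within a single component of $\cF(\cQ/\bP^1)$ give one class and that distinct components give distinct classes, so that the count forces bijectivity. Everything downstream is formal: the Weyl group acts by isometries, hence preserves norms and the residuation and orthogonality relations that characterize the quadric classes, so it can only permute them, and the resulting permutation representation is the standard one by inspection.
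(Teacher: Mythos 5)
Your proposal is correct and follows essentially the same route as the paper, which states the proposition as a summary of the facts recalled immediately before it (the identification $w_Q \in \{\pm L_1,\ldots,\pm L_{2n+3}\}$ via the $2(2n+3)$ components of $\cF(\cQ/\bP^1)$, the residuation rule $w_{Q'}=-w_Q$, and the orthogonality coming from $Q_1\cdot Q_2=1$, all drawn from Reid's thesis). Your intersection-form bookkeeping, including the $(-1)^n$ twist, matches the paper's conventions, so there is nothing to add.
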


Note that there are 
$$2^{2n+1}(2n+3)$$
reducible quadrics -- unions of two $n$-planes meeting in an
$(n-1)$-plane -- with $2^{2n}$ reducible quadrics
in each copy of the isotropic Grassmannian.

\section{Rationality constructions}
\label{sect:rat-con}

We now work over an arbitrary field $k$ of characteristic zero.
\subsection{General considerations}
\label{subsect:genrat}
Let $X\subset \bP^{d+2}$ denote a smooth complete intersection of two
quadrics of dimension at least two. Recall the following:
\begin{itemize}
\item If $X(k)\neq \emptyset$ then
$X$ is unirational over $k$ and has Zariski dense rational points
\cite[Rem.~3.28.3]{CTSSDI}.
\item If there is a line $\ell \subset X$ defined over $k$ then
projection induces a birational map
$\pi_{\ell}: X \stackrel{\sim}{\dashrightarrow} \bP^d$.
\end{itemize}

For reference, we recall Amer's theorem \cite[Th.~2.2]{Leep}:
\begin{theo} \label{theo:AB}
Let $k$ be a field of characteristic not two, $F_1$ and $F_2$
quadrics over $k$, and $\cQ_t = \{F_1 + t F_2\}$ the associated
pencil of quadrics over $k(t)$. Then $X=\{F_1=F_2=0\}$ has an
$r$-dimensional isotropic subspace over $k$ if and only if
$\cQ_t$ has an $r$-dimensional isotropic subspace over $k(t)$.
\end{theo}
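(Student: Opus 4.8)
The plan is to prove both implications, with the forward direction (existence over $k(t)$ forcing existence over $k$) carrying all the content. The easy direction is immediate: a common $r$-dimensional isotropic subspace $W\subset k^N$ for $F_1$ and $F_2$ base-changes to the $r$-dimensional isotropic subspace $W\otimes_k k(t)$ for $\cQ_t=F_1+tF_2$ over $k(t)$.

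For the converse, write $B_1,B_2$ for the polar forms (using $\bChara k\neq 2$) and set $B_t=B_1+tB_2$. Among \emph{all} $r$-dimensional isotropic subspaces $W\subseteq k(t)^N$ for $\cQ_t$, I would consider the $k[t]$-lattice $\Lambda=W\cap k[t]^N$ and choose a column-reduced basis $w_1(t),\dots,w_r(t)$, meaning one whose leading coefficient vectors $\ell_1,\dots,\ell_r\in k^N$ are $k$-linearly independent (such a reduced basis exists over the PID $k[t]$). I then minimize the total degree $\sum_i\deg w_i$ over all such $W$. Expanding the relations $B_t(w_i,w_j)=0$ and comparing the coefficient of the top power $t^{\deg w_i+\deg w_j+1}$ yields $B_2(\ell_i,\ell_j)=0$ for all $i,j$, so $\langle\ell_1,\dots,\ell_r\rangle$ is isotropic for $F_2$. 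If every $\deg w_i=0$, then $W$ is already defined over $k$, and reading off the coefficients of $t^0$ and $t^1$ in $B_t(w_i,w_j)=0$ shows it is isotropic for both $F_1$ and $F_2$, giving the desired common subspace.

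The engine of the argument is a degree-reduction step contradicting minimality whenever some $\deg w_r\geq 1$ (take $w_r$ of maximal degree). If $F_1(\ell_r)\neq 0$, then $\cQ_t(\ell_r)=F_1(\ell_r)\in k^\times$, so $\ell_r$ is anisotropic and the reflection $\sigma_{\ell_r}(x)=x-\frac{B_t(x,\ell_r)}{F_1(\ell_r)}\,\ell_r$ is an isometry of $\cQ_t$ defined over $k[t]$. A short degree computation, crucially using $B_2(\ell_i,\ell_r)=0$ from the previous step, shows $\sigma_{\ell_r}$ strictly lowers $\deg w_r$ without raising the other degrees; hence $\sigma_{\ell_r}(W)$ is an isotropic $r$-space of strictly smaller total degree, contradicting minimality. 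The model for this is the case $r=1$, where one simply replaces the primitive vector $v(t)$ by $v(t)-\frac{B_t(v,\ell)}{F_1(\ell)}\,\ell$ and iterates down to a constant, i.e.\ a common isotropic vector.

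The hard part will be the residual case $\deg w_r\geq 1$ with $F_1(\ell_r)=0$. Here $\ell_r$ is itself a common isotropic vector for $F_1$ and $F_2$, but the reflection is no longer available. I expect this to be the main obstacle. The plan is to substitute an Eichler (Siegel) transvection attached to the isotropic vector $\ell_r$ -- again an isometry of $\cQ_t$ over $k[t]$ -- with transvection parameter chosen to cancel the leading term $\ell_r\, t^{\deg w_r}$ of $w_r$; the delicate points are the solvability of the linear conditions defining this parameter and the control of its effect on the remaining $w_i$ and on the lower-degree data. An alternative is to reduce to $r=1$ and then induct on $r$, Witt-reducing the pencil along the $k$-rational common isotropic vector $\ell_r$; the difficulty in that route is performing the reduction compatibly over $k$ and $k(t)$ without losing dimensions, since a pencil of quadrics need not admit a common hyperbolic splitting.
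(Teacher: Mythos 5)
First, a point of comparison: the paper does not prove this statement at all --- Theorem~\ref{theo:AB} is quoted from Leep \cite[Th.~2.2]{Leep} as a known result --- so your attempt must be measured against the argument in the literature rather than against anything internal to the paper. With that said: your easy direction is fine, and your descent scheme (minimize the total degree of a reduced $k[t]$-basis of $W\cap k[t]^N$, extract $B_2(\ell_i,\ell_j)=0$ from the top coefficient, and reflect in $\ell_r$ when $F_1(\ell_r)\neq 0$) is correct and, for $r=1$, amounts to a complete proof; it is essentially Amer's original argument. One step you should make explicit: the cancellation of the leading term of $w_r$ under $\sigma_{\ell_r}$ does not follow from $B_2(\ell_i,\ell_r)=0$ alone. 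You need the coefficient of $t^{2d_r}$ in $\cQ_t(w_r)=0$, namely $F_1(\ell_r)+B_2(\ell_r,(w_r)_{d_r-1})=0$, which is exactly what makes the $t^{d_r}$-coefficient of $B_t(w_r,\ell_r)$ equal to $B_1(\ell_r,\ell_r)+B_2((w_r)_{d_r-1},\ell_r)=2F_1(\ell_r)-F_1(\ell_r)=F_1(\ell_r)$, so that the reflection subtracts precisely $\ell_r t^{d_r}+(\text{lower order})$. Note also that for $r=1$ there is no ``residual case'': if $F_1(\ell)=0$ then $\ell$ is already the desired common isotropic vector.

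The genuine gap is the residual case for $r\ge 2$, which you name but do not close, and it is not a technicality: it is precisely where the content of the generalization from $r=1$ (Amer--Brumer) to arbitrary $r$ (Leep) lives. When $F_1(\ell_r)=F_2(\ell_r)=0$ you know only that $\langle\ell_1,\dots,\ell_r\rangle$ is totally isotropic for $F_2$, not for $F_1$, and both of your proposed repairs have unverified failure points. For the Eichler transvection $E_{\ell_r,x}$, the identity above now forces $B_t(w_r,\ell_r)$ to have degree $\le d_r-1$, so killing the leading term requires the $t^{d_r}$-coefficient of $B_t(w_r,x)$ (plus a correction from the $\cQ_t(x)B_t(w_r,\ell_r)\ell_r$ term) to equal $-1$ while $x$ ranges over $\ell_r^{\perp B_1}\cap\ell_r^{\perp B_2}$; this inhomogeneous linear condition can be unsolvable, and even when solvable you must still verify that the degrees of $w_1,\dots,w_{r-1}$ and the reducedness of the basis survive. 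For the inductive route, passing from $V$ to $(\ell_r^{\perp B_1}\cap\ell_r^{\perp B_2})/\langle\ell_r\rangle$ loses up to one extra dimension relative to $\ell_r^{\perp B_t}/\langle\ell_r\rangle$, so the naive count only gives Witt index $\ge r-2$ over $k(t)$ for the reduced pencil, one short of what induction needs. Leep's proof does follow this second route --- induction on $r$ from the $r=1$ case --- but the whole difficulty is exactly in recovering that lost dimension, and your proposal leaves it open.
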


We apply this for $k=\bR$, where $X\subset \bP^{d+2}$ is a smooth complete
intersection of two quadrics and $\cQ \ra \bP^1$ is the associated
pencil.

Recall Springer's Theorem: A quadric hypersurface $\cQ$ over a field
$L$ has a rational point if it admits a rational point over some
odd-degree extension of $L$. Applying this to the pencil
$\cQ \ra \bP^1$ associated with a complete intersection of two 
quadrics, with Amer's Theorem, yields:
\begin{prop}
If $d\ge 1$ and $X$ contains a 
subvariety of 
odd degree over $k$ then $X(k) \neq \emptyset$.
\end{prop}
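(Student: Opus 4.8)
The plan is to convert the hypothesis into a rational point via the chain: odd-degree subvariety $\leadsto$ a closed point of odd residue degree $\leadsto$ Springer's theorem $\leadsto$ Amer's theorem.

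First I would pass from the subvariety $Z\subset X$ of odd degree (as a subvariety of $\bP^{d+2}$) to a closed point of $X$ whose residue field has odd degree over $k$. I intersect $Z$ with a general linear subspace $\Lambda\cong\bP^{\,d+2-\dim Z}$: since $k$ has characteristic zero and is therefore infinite, and since the locus of $\Lambda$ for which $Z\cap\Lambda$ is zero-dimensional of length $\deg Z$ is a nonempty Zariski-open subset of the relevant Grassmannian (a rational $k$-variety), I can choose such a $\Lambda$ defined over $k$. Then $Z\cap\Lambda$ is a zero-dimensional subscheme of $X$ defined over $k$ of length $\deg Z$, so $\sum_P \ell_P\,[\kappa(P):k]=\deg Z$, where $P$ runs over the closed points and $\ell_P$ denotes the local length. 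As $\deg Z$ is odd, some summand is odd, forcing $[\kappa(P):k]$ odd for at least one $P$. Writing $L=\kappa(P)$, this yields a point $x\in X(L)$ with $[L:k]$ odd.

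Next, I observe that $x$ is a common zero of the two defining quadrics $F_1,F_2$, hence a zero of every member $F_1+tF_2$ of the pencil; thus $x$ furnishes an $L$-point, a fortiori an $L(t)$-point, of the generic quadric $\cQ_t$ over $k(t)$. Since the minimal polynomial of a primitive element of $L/k$ remains irreducible over $k(t)$ by Gauss's lemma, the extension $L(t)/k(t)$ again has odd degree $[L:k]$. Applying Springer's theorem to the quadric hypersurface $\cQ_t$ over $k(t)$, which acquires a rational point over the odd-degree extension $L(t)$, gives $\cQ_t(k(t))\neq\emptyset$. Finally I invoke Amer's theorem (Theorem~\ref{theo:AB}) with $r=1$: the pencil $\cQ_t$ has a one-dimensional isotropic subspace over $k(t)$ precisely when $X$ has one over $k$, which is exactly $X(k)\neq\emptyset$.

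The main obstacle is the first step: one must arrange the general linear section to be defined over $k$ and of the expected length, and then run the parity bookkeeping on $\sum_P \ell_P\,[\kappa(P):k]$ to extract an odd-degree closed point. The remaining steps are essentially formal once Springer's theorem and Amer's theorem are in hand, the only care being the verification that tensoring the odd-degree extension with $k(t)$ preserves both fieldness and the odd degree.
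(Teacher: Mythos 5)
Your proof is correct and takes essentially the same route as the paper, which states the proposition as a direct consequence of applying Springer's theorem to the generic fiber $\cQ_t$ of the pencil over $k(t)$ and then Amer's theorem (Theorem~\ref{theo:AB}) with $r=1$, without spelling out details. Your reduction from an odd-degree subvariety to a closed point of odd residue degree via a general linear section is the standard missing step, and the check that $L\otimes_k k(t)=L(t)$ remains a field of odd degree is exactly the right bookkeeping.
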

We can prove a bit more:
\begin{prop}
If $d\ge 2$ and 
$X$ has a curve of odd degree defined over $k$ then $X$
is rational over $k$.
\end{prop}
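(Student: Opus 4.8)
The plan is to produce a line $\ell\subset X$ defined over $k$, since projection from a line yields a birational map $\pi_\ell\colon X\dashrightarrow \bP^d$ (Section~\ref{subsect:genrat}), so rationality follows at once. By Amer's Theorem~\ref{theo:AB} with $r=2$, a line in $X$ over $k$ (a two-dimensional isotropic subspace for the two forms) exists if and only if the generic member $\cQ_t$ of the pencil admits a two-dimensional isotropic subspace over $K:=k(t)$, i.e.\ has Witt index at least two. I will obtain the latter by a tangent-line construction combined with Springer's theorem. First I would invoke the preceding proposition: the odd-degree curve is in particular an odd-degree subvariety, so $X(k)\neq\emptyset$, and by the density of rational points recalled in Section~\ref{subsect:genrat} I may choose a \emph{general} point $x\in X(k)$, in particular with $x\notin C$ and $x$ lying on the smooth generic quadric $\cQ_t$ over $K$. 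Let $T_x\cQ_t$ denote the tangent hyperplane to $\cQ_t$ at $x$, which is defined over $K$.

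The key step is a tangent-line argument carried out over $K$. Writing $\delta=\deg C$ (odd), and noting that $C\not\subset T_x\cQ_t$ for general $x$, the scheme-theoretic intersection $C\cap T_x\cQ_t$ is a zero-cycle of degree $\delta$ on $C_K$; since $\delta$ is odd, at least one closed point $c$ occurring in this cycle has odd degree over $K$. For such a point the line $\overline{xc}$ lies in $T_x\cQ_t$ (it passes through $x$ and through $c\in T_x\cQ_t$) and meets $\cQ_t$ at the two distinct points $x,c$. A length count finishes this step: if $\overline{xc}$ were not contained in $\cQ_t$, then tangency would force $\overline{xc}\cap\cQ_t=2[x]$, contradicting $c\in\cQ_t\cap\overline{xc}$ with $c\neq x$; hence $\overline{xc}\subset\cQ_t$. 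Together with the $K$-rational point $x$, the line $\overline{xc}$ exhibits a two-dimensional isotropic subspace of $\cQ_t$ over the odd-degree extension $K(c)/K$, so $\cQ_t$ has Witt index at least two over $K(c)$.

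To conclude I would descend the Witt index along the odd-degree extension. A standard consequence of Springer's theorem is that the Witt index of a quadratic form is unchanged under an odd-degree field extension: if the anisotropic part over $K$ became isotropic over $K(c)$ it would, by Springer, already be isotropic over $K$. Thus $\cQ_t$ has Witt index at least two over $K=k(t)$, hence contains a line over $k(t)$, and Amer's Theorem~\ref{theo:AB} returns a line in $X$ defined over $k$; projecting from it gives the rationality of $X$.

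The step I expect to be the main obstacle is the genericity underlying the tangent-line construction — namely that one can choose $x\in X(k)$ with $x\notin C$, with $C\not\subset T_x\cQ_t$, and with the intersection $C\cap T_x\cQ_t$ proper, so that its degree is exactly the odd number $\delta$. Each of these holds for general $x$ because $X(k)$ is Zariski dense and the conditions are closed and proper, but it is precisely the \emph{parity} of $\deg\bigl(C\cap T_x\cQ_t\bigr)$, inherited from the oddness of $\deg C$, that makes the whole argument work and that explains why a curve of \emph{odd} degree (rather than merely a rational point) is needed to force Witt index two.
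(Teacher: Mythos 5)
Your proof is correct, but it is packaged differently from the paper's. The paper performs the double projection from a general $x\in X(k)$ to obtain a quadric bundle $Y\to\bP^1$ of relative dimension $d-1$, observes that $C$ gives an odd-degree multisection, applies Springer's theorem to get a section, and concludes that the generic fiber $Y_t$ -- a quadric with a $k(t)$-point -- is rational over $k(t)$, hence $X$ is rational; no line on $X$ is ever produced in that proof. You instead aim directly for a line over $k$: you reduce via Amer's Theorem~\ref{theo:AB} to finding an isotropic plane of the generic pencil member $\cQ_t$ over $k(t)$, produce one over an odd-degree extension by intersecting $C$ with the tangent hyperplane $T_x\cQ_t$ and using the standard tangent-line length count, and descend the Witt index by Springer. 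The two arguments are really two faces of the same computation -- your odd-degree point $c\in C_K\cap T_x\cQ_t$ is exactly the generic fiber of the paper's odd multisection, and the paper's Remark~\ref{rema:Witt} records that a section of $Y\to\bP^1$ likewise yields a line on $X$ via Amer -- but your route buys the stronger intermediate conclusion that $X$ contains a line over $k$ (which the paper extracts separately and uses later), at the cost of some genericity bookkeeping ($x\notin C$, $C\not\subset T_x\cQ_t$, properness of the intersection), all of which you correctly note hold for general $x$ by density of $X(k)$. The paper's version is shorter because a quadric of dimension $d-1\ge 1$ with a rational point is already rational, so it never needs Witt index two, Amer, or the projection from a line.
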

\begin{proof}
Recall that double projection from a sufficiently general rational
point $x\in X(k)$ yields a diagram
$$ X \stackrel{\sim}{\dashrightarrow} Y \ra \bP^1$$
where $Y$ is a quadric bundle of relative dimension $d-1$. 
A curve $C\subset X$ of odd degree yields an multisection of
this bundle of odd degree. Thus $Y\ra \bP^1$ has a section
by Springer's Theorem and its generic fiber $Y_t$ is rational over $k(\bP^1)$. 
It follows that $X$ is rational over the ground field.
\end{proof}

\begin{rema} \label{rema:Witt}
The pencil defining $X$ gives a quadric bundle
$$\cQ \ra \bP^1$$
of relative dimension $d+1$. We apply Witt's decomposition theorem
to $[\cQ_t]$ and $[Y_t]$, understood as quadratic forms over 
$k(\bP^1)=k(t)$, to obtain
$$[\cQ_t] = [Y_t] \oplus \left( \begin{matrix} 0 & 1 \\ 1 & 0 \end{matrix}
\right).$$
Thus a section of $Y\ra \bP^1$ yields an isotropic line of $\cQ \ra \bP^1$,
and Theorem~\ref{theo:AB} implies that $X$ contains a line defined over $k$.
\end{rema}

\begin{coro} (see appendix by Colliot-Th\'el\`ene \cite[Th.~A5]{HTplusCT})
Let $X\subset \bP^{d+2}$ denote a smooth complete intersection of
two quadrics of dimension at least two. Suppose there exists an
irreducible positive-dimensional subvariety
$W\subset X$ of odd degree, defined over $k$.
Then $X$ is rational over $k$.
\end{coro}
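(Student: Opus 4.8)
The plan is to reduce to the case already treated -- a curve of odd degree defined over $k$ -- and then invoke the preceding proposition. Since $X$ has dimension $d \ge 2$ and $W \subset X \subset \bP^{d+2}$ is irreducible of dimension $m \ge 1$ and odd degree $\delta$, the only work is to cut $W$ down to a curve while retaining both the odd degree and the field of definition. If $m=1$ there is nothing to do: $W$ itself is a curve of odd degree over $k$ and the preceding proposition applies directly. So I would assume $m \ge 2$.

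The key step is to choose a linear subspace $\Lambda \subset \bP^{d+2}$ of codimension $m-1$, defined over $k$, such that $C := W \cap \Lambda$ is a curve of degree $\delta$. The expected dimension of $W \cap \Lambda$ is $m-(m-1)=1$, and for general $\Lambda$ the intersection is pure of dimension one and of degree $\deg W = \delta$, which is odd. Since $k$ has characteristic zero it is infinite, so the $k$-points are Zariski dense in the Grassmannian parametrizing such $\Lambda$; as the conditions ``$\dim(W\cap\Lambda)=1$'' and ``$\deg(W\cap\Lambda)=\delta$'' are open and hold on a dense subset, they are satisfied by some $k$-rational $\Lambda$. This produces the desired curve $C \subset X$ of odd degree, defined over $k$.

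Finally I would apply the preceding proposition to $C$. The curve $C$ need not be irreducible, but this is harmless: that argument only uses that the induced multisection of the quadric bundle $Y \to \bP^1$ has odd degree, and an odd-degree $1$-cycle over $k$ restricts to an odd-degree zero-cycle on the generic fiber $Y_t$, to which Springer's Theorem applies. (If one prefers an irreducible $C$, a Bertini irreducibility argument in characteristic zero lets one take the general linear section geometrically irreducible, since $\dim W \ge 2$.) The rational point needed for the double projection is itself furnished by the odd-degree curve, via the first proposition of this subsection, so no separate hypothesis $X(k)\neq\emptyset$ is required.

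The main obstacle is the middle step: ensuring the linear section can be defined over $k$ itself rather than over some extension, with the correct dimension and odd degree. This is a Bertini-type statement over an infinite field; irreducibility of $W$ guarantees that the general section has the expected dimension, and additivity of degree on cycles then forces the parity to persist under restriction to $\Lambda$.
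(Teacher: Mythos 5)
Your argument is correct and is essentially the proof that the paper delegates to the cited appendix of Colliot-Th\'el\`ene: since $k$ is infinite, a general linear section of $W$ over $k$ of the appropriate codimension is a curve in $X$ of the same odd degree, and the two preceding propositions then give a $k$-point and rationality. You also correctly handle the only delicate points, namely that the linear section need not be irreducible (harmless, since the quadric-bundle argument only needs an odd-degree multisection, some horizontal component of which has odd degree over $\bP^1$, so Springer's theorem still applies) and that the rational point needed for the double projection is supplied by the first proposition.
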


Given these results, we focus on proving rationality in cases where $X$ 
does not contain lines or positive-dimensional subvarieties of odd degree.

\subsection{Rationality using half-dimensional subvarieties}
\label{sect:half}

We now turn to even-dimensional intersections of two quadrics
$$
X\subset \bP^{2n+2}, \quad n\ge 1. 
$$
Throughout, we assume that 
$X(k)\neq \emptyset,$  and thus $X$ is $k$-unirational and $k$-rational points are Zariski dense.

\

\noindent
{\bf Construction I:}
Suppose that 
\begin{itemize}
\item{$X$ has a pair of conjugate disjoint $n$-planes $P,\bar{P}$,
defined over a quadratic extension $K$ of $k$.}
\end{itemize}
Projecting from a general $x\in X(k)$ gives a birational map 
$$X \stackrel{\sim}{\dashrightarrow} X' \subset \bP^{2n+1},$$
where $X'$ is a cubic hypersurface. 

Since $X(k) \subset X$ is Zariski dense, we may assume that
the images of $P$ and $\bar{P}$ in $X'$ remain disjoint. The
`third point' construction gives a birational map
$$\mathbf{R}_{K/k}(P) \stackrel{\sim}{\dashrightarrow} X',$$
where the source variety is the restriction of scalars. 
We conclude that $X$ is rational over $k$. 
This construction appears in \cite[Th.~2.4]{CTSSDI}.

\

\noindent
{\bf Construction II:}
Suppose that
\begin{itemize}
\item{$X$ has a pair of conjugate disjoint quadric $n$-folds $Q,\bar{Q}$,
defined over a quadratic extension $K$ of $k$, and meeting transversally
at one point.}
\end{itemize}
Projecting from $x\in Q\cap \bar{Q}$, which is a $k$-rational
point $X$, gives a birational map 
$$X \stackrel{\sim}{\dashrightarrow} X' \subset \bP^{2n+1},$$
where $X'$ is a cubic hypersurface. 

The proper transforms $Q',\bar{Q}'\subset X'$ are disjoint unless there
exists a line 
$$x\in \ell \subset \bP^{2n+2}$$
with
$$\{x\} \subsetneq \ell \cap Q , \ell \cap \bar{Q}$$
as schemes.  
We may assume that $\ell \not \subset X$ as we already know $X$ is rational
in this case. Thus the only possibility is 
$$\ell \cap Q = \ell \cap \bar{Q}$$
as length-two subschemes, which is precluded by the intersection assumption.

Repeating the argument for Construction I thus gives rationality.

\

\noindent
{\bf Construction III:}
Suppose that 
\begin{itemize}
\item 
$X$ contains a quadric $Q$ of dimension $n$, defined over $k$.
\end{itemize}
Projection gives a fibration
$$
        q:\Bl_Q(X) \rightarrow \bP^n
$$
with fibers quadrics of dimension $n$. Now suppose that $X$ 
contains a second $n$-fold
$T$ with the property that
$$
\deg(T) - T\cdot Q
$$
is odd, i.e., a multisection for $q$ of odd degree. It follows that the 
generic fiber of $q$ is rational and thus $Y$ is rational over $k$.

When $\dim(X)=4$
a number of $T$ might work, e.g., 
\begin{itemize}
\item a plane disjoint from $Q$,
\item a second quadric meeting $Q$
in one point, 
\item a quartic or a sextic del Pezzo surface meeting $Q$ in one or three points, 
\item a degree 8 K3 surface meeting $Q$ in one, three, five, or seven points.
\end{itemize}

\

\noindent
{\bf Construction IV:}
Suppose that 
\begin{itemize}
\item $\dim(X)=4$ and $X$
contains a quartic scroll $T$, defined over $k$. 
\end{itemize}
Geometrically, 
$T$ is the image of
the ruled surface
$$\bF_0=\bP^1 \times \bP^1 \hookrightarrow \bP^5$$
under the linear series of bidegree $(1,2)$.
Over $\bR$
we are interested in cases where $T=\bP^1 \times C$ with $C$ a 
nonsplit conic. We do not want to force $X$ to 
have lines! 
(Note that quartic scrolls geometrically isomorphic to $\bF_2$
contain lines defined over the ground field and thus are not
useful for our purposes.)

On projecting from a point $x\in X(k)$ we get a cubic
fourfold 
$$
X'\subset \bP^5,
$$ 
containing a quartic scroll.
The Beauville-Donagi construction \cite{BD} -- concretely,
take the image under the linear system of quadrics
vanishing along $T$ --
shows that $X'$ is birational to a quadric hypersurface
thus rational over $k$.

\

Recall that an $n$-dimensional smooth variety $W \subset \bP^{2n+1}$ is said to have
{\em one apparent double point} if a generic point is contained
in a unique secant to $W$.

\

\noindent
{\bf  Construction V:}
Suppose that $X$ contains a variety $W$ defined over $k$ of dimension
$n\ge 2$ such that:
\begin{itemize}
\item $W$ spans a $\bP^{2n+1}$ and has one apparent double point; or
\item $W$ has a rational point $w$ such that projecting from $w$ maps $W$ birationally to a variety with one apparent double point.
\end{itemize}
Then $X$ is rational over $k$. 

As before, one projects from a rational point to a get cubic 
hypersurface $X' \subset \bP^{2n+1}$. 
Cubic hypersurfaces containing varieties $W$ with
one apparent double point are rational \cite[Prop.~9]{Russo}.
Indeed, intersecting secant lines of $W$ with $X'$ yields
$$\Sym^2(W) \dashrightarrow X',$$
which is birational when each point lies on a unique secant to $W$.

Quartic scrolls in $\bP^5$ have one apparent
double point so Construction IV is a special case of Construction V.

\section{Isotopy classification}
\label{sect:isotopy}
We review the classification of smooth complete intersections of two
quadrics $X\subset \bP^{2n+2}$ over $\bR$, following \cite{Krasnov18}.

Express $X=\{F_1=F_2=0\}$ where $F_1$ and $F_2$ are real quadratic forms.
We continue to use $D$ for the degeneracy locus of the associated pencil
$\cQ_t = \{t_1F_1+t_2F_2=0\}$.
Let $r=|D(\bR)|$ which is odd with $r \le 2n+3$. 
Consider the signatures $(I^+,I^-)$ of the forms
$$s_1F_1+s_2F_2, \quad (s_1,s_2) \in \bS^1 = \{(s_1,s_2) \in \bR^2:s_1^2+s_2^2=1\}.$$
Record these at the $2r$ points lying over $D$, in order as we trace the circle counterclockwise.
We label each of these points with $\pm$ depending on whether the positive part $I^+$ of the
signature increases or decreases as we cross the point. Each point of $D(\bR)$ yields a pair 
of antipodal points on $\bS^1$ labelled with opposite signs.
For example, for $n=0$ and $r=3$ admissible sequences of signatures and $\pm$'s include
$$
(0,2) (1,1) (2,0) (2,0) (1,1) (0,2) \quad (+,+,+,-,-,-).$$
and 
$$(1,1) (1,1) (1,1) (1,1) (1,1) (1,1) \quad (+,-,+,-,+,-).$$

Suppose the sequence of $\pm$'s has maximal unbroken chains of $+$'s of lengths
$r_1, r_2, \ldots, r_{2s+1}$ where
$$r=r_1+r_2+\cdots+r_{2s+1}.$$
The number of terms is odd because antipodal points have opposite signs.
In the examples above we have $3=3$ and $3=1+1+1$.  
Our invariant is the sequence $(r_1,\ldots,r_{2s+1})$ up to cyclic permutations and
reversals -- a complete isotopy invariant of $X$ over $\bR$ \cite{Krasnov18}.

We derive a sequence of $\pm 1$'s of length $k$ from this invariant as follows: For each point of $D(\bR)$,
record the sign of the discriminant of the associated rank-$(2n+2)$ quadratic form, determined by the 
parity of $(I^+-I^-)/2$.   
In the examples above, we obtain $(-1,+1,-1)$ and $(+1,+1,+1)$. The number of $-1$'s is always even.

The analysis in Section~\ref{subsect:quadrics} shows that complex conjugation acts on $H^{2n}(X,\bZ(n))$
in the basis $\{L_1,\ldots,L_{2n+3}\}$ as a signed permutation of order two. This is a direct sum of blocks
$$(+1), (-1), \pm \left( \begin{matrix} 0 & 1 \\
							1 & 0 \end{matrix} \right).$$
Actually, we may assume the sign is positive in the third case after conjugating by 
$$\left( \begin{matrix} 0  & -1 \\
			          1 & 0 \end{matrix} \right).$$
Suppose there are $a$ blocks $(+1)$, $2b$ blocks $(-1)$, and $c$ blocks of the third kind, with
$a+2b+2c=2n+3$.
These correspond to the conjugacy classes of involutions $\iota \in W(D_{2n+3})$ \cite[\S 3.2,3.3]{Kottwitz}.
We have $r=a+2b$, reflecting the number of points of $D(\bR)$ with positive and negative discriminants 
respectively, and $2c=2n+3-r$, reflecting the number of complex-conjugate pairs in $D(\bC)\setminus D(\bR)$.

The passage from isotopy classes to conjugacy classes in $W(D_{2n+3})$ results in a loss of information.
We give an example for real quartic del Pezzo surfaces $X\subset \bP^4$.

\begin{exam} \label{exam:surface}
The isotopy class $(5)$ has singular members with signatures 
$$(0,4) (1,3) (2,2) (3,1) (4,0) (4,0) (3,1) (2,2) (1,3) (0,4)$$
with involution given the diagonal $5\times 5$ matrix
$$\diag(1,-1,{\bf{1}},-1,1),$$
where the bolded $\bf{1}$ corresponds to a degenerate fiber $\cQ_t$ whose rulings sweep out quadric curves
(conics) on $X$ defined over $\bR$. (There is only one pair of such conics.) Here $X(\bR)=\emptyset$ as it is contained in an anisotropic
quadric threefold.

The isotopy class $(2,2,1)$ has singular members with signatures
$$(1,3) (2,2) (2,2) (2,2) (3,1) (3,1) (2,2) (2,2) (2,2) (1,3)$$
with involution
$$\diag(-1,{\bf{1}},{\bf{1}},{\bf{1}},-1).$$
This has the same Galois action but contains three pairs of conics defined over $\bR$, represented by the bolded
$\bf{1}$'s.

These are distinguished cohomologically by the arrow
$$\bZ^3 \simeq H^0(G, \Pic(X_{\bC})) \ra H^2(G, \Gamma(\cO^*_{X_{\bC}}))=\Br(\bR)\simeq \bZ/2\bZ$$
of the Hochschild-Serre spectral sequence.
\end{exam}

\begin{prop}
Fix a conjugacy class $[\iota=\iota_{abc}]$ of involutions in $W(D_{2n+3})$. Consider the isotopy classes of 
$X(\bR)\subset \bP^{2n+2}$ such that complex conjugation acts by $\iota$. The possible isotopy classes correspond to shuffles of
$$(\underbrace{1,\ldots,1}_{a \text{ times}},\underbrace{-1,\ldots,-1}_{2b \text{ times}})$$
up to cyclic permutations and reversals.
\end{prop}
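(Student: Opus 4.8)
The plan is to turn both sides of the asserted correspondence into necklaces and to exhibit an explicit bijection between them. By Krasnov's theorem an isotopy class on which complex conjugation acts by $\iota_{abc}$ is recorded by the cyclic sequence of signs $\epsilon_1,\dots,\epsilon_{2r}$ attached to the $2r$ points lying over $D(\bR)$, taken up to rotation and reversal; antipodal points carry opposite signs, so this sequence is antisymmetric, $\epsilon_{i+r}=-\epsilon_i$, and its maximal $+$-chain lengths recover the invariant $(r_1,\dots,r_{2s+1})$. The target consists of the discriminant sequences $d_1,\dots,d_r$, again up to rotation and reversal, where I regard $d$ as the $r$-periodic extension to $\bZ/2r$. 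Thus I must match antisymmetric sign sequences of length $2r$ with discriminant sequences of length $r$ carrying $a$ entries $+1$ and $2b$ entries $-1$.

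First I would record how the discriminant changes across a degenerate fibre. Crossing a point of $D(\bR)$ in the chosen direction flips exactly one eigenvalue, so the two rank-$(2n+2)$ signatures straddling two consecutive points are determined by the intermediate nondegenerate signature $(J^+,J^-)$ with $J^++J^-=2n+3$. A short bookkeeping of the four possibilities for the adjacent labels then shows that consecutive discriminant signs differ exactly when the two labels agree; equivalently
$$ d_i\, d_{i+1} = -\,\epsilon_i\,\epsilon_{i+1}, \qquad i \in \bZ/2r. $$

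This recurrence is the bijection. Given $\epsilon$ it determines $d$ up to a single global sign, and conversely; because $r$ is odd, the telescoping identity $\prod_{j=i}^{i+r-1}(d_j d_{j+1})=d_i d_{i+r}$ combined with the display forces $d_{i+r}=d_i$ precisely because $\epsilon_{i+r}=-\epsilon_i$, and symmetrically the $r$-periodicity of $d$ forces the antisymmetry of the recovered $\epsilon$. The residual global sign of $\epsilon$ is exactly the rotation by $r$, hence already absorbed into ``up to rotation'' and harmless to the multiset of $+$-chain lengths. The global sign of $d$ is instead pinned by the Galois action: by the analysis of Section~\ref{subsect:quadrics} the two rulings of the degenerate quadric over a real point $t_i$ carry the residual classes $L_i$ and $-L_i$, and complex conjugation fixes $L_i$ or sends it to $-L_i$ according as it preserves or exchanges the two rulings, i.e.\ according as $d_i=+1$ or $d_i=-1$. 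Hence $\#\{i:d_i=+1\}=a$ and $\#\{i:d_i=-1\}=2b$, so every discriminant sequence is a shuffle of $(1^a,(-1)^{2b})$. Finally $a+2b+2c=2n+3$ makes $a$ odd, which simultaneously guarantees that the number $2s+1$ of $+$-chains recovered from any such shuffle is odd (so that it is a legitimate invariant) and yields surjectivity; realizability of each invariant is Krasnov's classification. Checking that rotation and reversal of $\epsilon$ correspond to rotation and reversal of $d$ is immediate from the displayed relation, since the discriminant is independent of the traversal direction.

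The main obstacle is the Galois identification that pins the global sign, and thereby distinguishes the count $a$ from $2b$: the recurrence alone leaves a twofold ambiguity, and since $r$ is odd exactly one of the two global signs produces an \emph{even} number of $-1$'s, which must be matched with the $(-1)$-blocks of $\iota_{abc}$ rather than the $(+1)$-blocks. Establishing that $d_i=+1$ is equivalent to conjugation preserving the two rulings over $t_i$ (in the sign convention fixed in Section~\ref{subsect:quadrics}) is the conceptual crux; the signature bookkeeping behind the recurrence and the parity argument using $r$ odd are routine once this identification is in place.
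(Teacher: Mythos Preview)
Your argument is correct and rests on the same key observation as the paper's proof: consecutive discriminant signs agree precisely when the adjacent $\epsilon$-labels differ, which you encode as the recurrence $d_i d_{i+1}=-\epsilon_i\epsilon_{i+1}$ and the paper phrases as ``local extrema of $I^+-I^-$ occur exactly between degeneracy points where the discriminant sign does not change.'' The paper then illustrates the inverse construction on a single worked example, whereas you carry out the bijection in full generality and handle the global-sign ambiguity and the compatibility with cyclic permutation and reversal explicitly; this is more thorough than the paper but not a different method.
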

\begin{proof}
Observe first that points in $D(\bC)\setminus D(\bR)$ are irrelevant to the Krasnov invariant, 
assuming the dimension is given. So it makes sense they are not relevant in the enumeration.

We have already seen that sequences of the prescribed form arise from each isotopy class; we present
the reverse construction.

Choose such a sequence, e.g.
$$(1,1,1,-1,1,-1,-1,1,-1).$$
The key observation is that local maxima and minima of $I^+-I^-$ -- which necessarily occur at
smooth points -- arise precisely between points of the degeneracy locus where signs do {\em not} change.
We indicate smooth fibers achieving maxima/minima with $\mid$, e.g.,
$$ 1\mid 1 \mid 1,-1,1,-1 \mid -1,1,-1$$
or equivalently
$$\mid 1 \mid 1,-1,1,-1 \mid -1,1,-1,1\mid $$
after cyclic permutation.

Lifting to the double cover $\bS^1$ entails concatenating two such expressions:
$$\mid 1 \mid 1,-1,1,-1 \mid -1,1,-1,1
\mid 1 \mid 1,-1,1,-1 \mid -1,1,-1,1\mid$$
From this, we read off the points of $D(\bR)$ on which $I^+$ increases and decreases
$$+----++++-++++----$$
which determines the Krasnov invariant -- $(1,4,4)$ in this example.
\end{proof}

\section{Applying quadratic forms}
\label{sect:rev}

\subsection{Quadric fibrations over real curves}
\label{sect:Witt}

Let $C$ be a smooth projective geometrically connected curve
over $\bR$ with function field $K=\bR(C)$. Let $Q \subset \bP^{d+1}$
be a smooth (rank $d+2$) quadric hypersurface over $K$ and $F_i(Q)$
the $i$-dimensional isotropic subspaces, so that $F_0(Q)=Q$
and $F_m(Q)$ is empty when $2m > d$. If $d=2m$ then
$F_m(Q)$ has two geometrically connected components; otherwise it is
connected.  

Suppose that $\pi:\cQ \ra C$ is a regular projective model of $Q$,
such that the fibers are all quadric hypersurfaces of rank at least $d+1$.  
The locus $D \subset C$ corresponding to fibers of rank $d+1$ is
called the {\em degeneracy locus}.

Fundamental results of Witt -- see \cite[\S 2]{CT} and \cite{Sch} 
for a modern formulation in terms of local-global principles
-- assert that:
\begin{itemize}
\item{if $d>0$ then $Q(K)\neq \emptyset$ if $\cQ_c=\pi^{-1}(c)$ 
has a smooth real point for each $c\in C(\bR)$;}
\item{if $d>2$ then $F_1(Q)(K) \neq \emptyset$ if $F_1(\cQ_c)$
has a smooth real point for each $c\in C(\bR)$.}
\end{itemize}
We can translate these into conditions on the signatures of the
smooth fibers
\begin{itemize}
\item{if $d>0$ then $Q(K) \neq \emptyset$ if $\cQ_c$ is not
definite for any $c\in (C\setminus D)(\bR)$;}
\item{if $d>2$ then $F_1(Q)(K) \neq \emptyset$ if $\cQ_c$ does
not have signatures $(d+2,0),(d+1,1),(1,d+1)$ or $(0,d+2)$
for any $c\in (C\setminus D)(\bR)$.}
\end{itemize}
In other words, we have points and lines over $K$ if the fibers
permit them.

This reflects a general principle: Suppose $\cX$ is regular and
has a flat proper morphism $\varpi:\cX \ra C$ to a curve $C$,
all defined over $\bR$.
The local-global and reciprocity obstructions to sections of 
$\varpi$ are reflected in the absence of {\em continuous} sections 
$C(\bR) \ra \cX(\bR)$ for induced map of the underlying
real manifolds \cite{Ducros}.

\subsection{Implications of Amer's Theorem}
\label{sect:amer}
Let $X\subset \bP^{d+2}$ be a smooth complete intersection of two quadrics over $\bR$
and $\cQ \ra \bP^1$ the associated pencil of quadrics.

The results
of Section~\ref{sect:Witt} imply that 
$\cQ \ra \bP^1$ has a section unless the Krasnov invariant
is $(d+3)$; the variety of lines $F_1(\cQ/\bP^1)\ra \bP^1$
has a section unless the Krasnov invariant is
$$(d+3), 
(d+2-e,e,1) \text{ with } 1\le e \le \tfrac{d+2}{2}, \quad 
(d+1).$$
Thus the Krasnov invariant determines 
which dimensional linear subspaces and quadrics appear on $X$:
\begin{prop} \label{prop:noline}
Let $X \subset \bP^{d+2}$ be a smooth complete intersection of two quadrics defined
over $\bR$.
The only isotopy classes of $X$ that fail to contain a line are:
\begin{itemize}
\item{$(d+3)$ where $X(\bR)=\emptyset$;}
\item{$(d+2-e,e,1)$ with $1\le e \le \tfrac{d+2}{2}$;}
\item{$(d+1)$.}
\end{itemize}
\end{prop}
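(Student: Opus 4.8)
The plan is to translate the existence of a line on $X$ into a statement about the generic fiber of the pencil, and then into a signature condition that can be read off the Krasnov invariant. First I would invoke Amer's Theorem~\ref{theo:AB}: a line $\ell\subset X$ over $\bR$ is exactly a two-dimensional subspace isotropic for both forms, so it exists if and only if the generic fiber $\cQ_t$ — a smooth quadric of dimension $d+1$ over $\bR(t)$ — contains a line over $\bR(t)$, i.e. if and only if $F_1(\cQ/\bP^1)\ra\bP^1$ admits a section. Since $\dim\cQ_t=d+1>2$ when $d\ge 2$, the Witt local--global principle of Section~\ref{sect:Witt} turns this into the condition that no smooth real fiber $\cQ_c$ has signature $(d+3,0),(d+2,1),(1,d+2)$ or $(0,d+3)$; equivalently, that every smooth real fiber satisfies $2\le I^+\le d+1$.

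Next I would reformulate this through the signature walk of Section~\ref{sect:isotopy}. Tracing $I^+$ around $\bS^1$, the antipodal symmetry $s\mapsto -s$ interchanges $I^+$ and $I^-$, so the values taken by $I^+$ on smooth fibers are symmetric about $(d+3)/2$ and $\min I^+ + \max I^+ = d+3$. Hence the absence of a line — the failure of $2\le I^+\le d+1$ — is equivalent to $\min I^+\le 1$, that is, to the amplitude $A:=\max I^+-\min I^+$ satisfying $A\ge d+1$. The same computation records $X(\bR)$: a definite fiber occurs exactly when $\min I^+=0$, so among the no-line classes $\min I^+=0$ forces $X(\bR)=\emptyset$ while $\min I^+=1$ gives $X(\bR)\neq\emptyset$.

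The combinatorial core, which I expect to be the main obstacle, is to show that $A\ge d+1$ holds for precisely the three listed families. The up-steps of the walk are the $+$-chains recorded by the Krasnov invariant, with lengths $r_1,\dots,r_{2s+1}$ summing to $r=|D(\bR)|$; by antipodal anti-symmetry the down-steps have the same multiset of lengths, and the number of $+$-chains is odd. The key estimate is
$$A\le r-2s.$$
To prove it I would follow an ascending arc from a global minimum of $I^+$ to a global maximum: it alternates up- and down-chains, starting and ending with an up-chain, hence uses $j$ up-chains and $j-1$ down-chains for some $j$. The complementary arc carries the remaining $(2s+1)-j$ up-chains, each of length at least $1$, so the ascending up-chains contribute at most $r-((2s+1)-j)$, while the $j-1$ intervening descents subtract at least $j-1$; combining gives $A\le r-2s$.

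Finally I would run the classification. Because $r\le d+3$, the bound forces $s\le 1$ whenever $A\ge d+1$. For $s=0$ (a single chain) one has $A=r$, so $A\ge d+1$ selects $r\in\{d+1,d+3\}$ and yields the classes $(d+1)$ and $(d+3)$. For $s=1$ a direct computation on the three-chain walk gives $A=r-2\min_i r_i$; together with $A\le r-2\le d+1$ this forces $A=d+1$, hence $r=d+3$ and $\min_i r_i=1$, i.e. the family $(d+2-e,e,1)$ with $1\le e\le\tfrac{d+2}{2}$. Conversely each of these three families has $A\in\{d+1,d+3\}$, so none contains a line, and the sign of $\min I^+$ recovers the stated conditions on $X(\bR)$. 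The delicate points are the amplitude bound and the exact value of $A$ for three chains; the rest is bookkeeping with Theorem~\ref{theo:AB} and the results of Section~\ref{sect:Witt}.
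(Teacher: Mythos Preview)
Your proposal is correct and follows exactly the route the paper indicates: reduce via Theorem~\ref{theo:AB} to the existence of a line on the generic fiber, then apply Witt's local--global principle from Section~\ref{sect:Witt} to convert this into the signature condition $2\le I^+\le d+1$ on all smooth real fibers. The paper simply asserts at this point that the listed Krasnov invariants are exactly those for which the signature condition fails, without spelling out the combinatorics; you supply that argument in full. Your amplitude bound $A\le r-2s$ and the explicit computation $A=r-2\min_i r_i$ for three chains are the details the paper leaves to the reader, and they are correct (for $s=1$ the antipodal anti-symmetry forces the down-chain lengths to be a cyclic shift of $(r_1,r_2,r_3)$, from which the formula follows after normalizing so that the smallest $r_i$ sits in the right position). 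One small remark: the hypothesis $d\ge 2$ is needed to invoke the Witt principle for $F_1$, and the paper is tacitly assuming this throughout; you flag it, which is good.
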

The case $(d+1,1,1)$ has disconnected real 
locus $X(\bR)$ \cite[p.~117]{Krasnov18}; thus $X$ cannot be rational
over $\bR$. The cases $(d+2-e,e,1)$ with
$2\le e \le \tfrac{d+2}{2}$ are connected.

\subsection{Quadric $n$-folds over $\bR$}
\label{subsect:overR}
Assume now that $X$ has even dimension $d=2n$.
We may read off from the invariant $(r_1,\ldots,r_{2s+1})$ 
which classes of quadric $n$-folds 
$Q \subset X_{\bC}$
are realized by quadrics defined over $\bR$.

Fix a smooth real quadric hypersurface 
$${\bf Q}=\{F=0\} \subset \bP^{2n+1}.$$
The following conditions are equivalent:
\begin{itemize}
\item
the geometric components of the variety of maximal isotropic subspaces 
$\OGr(\bf{Q})$ are defined over $\bR$;
\item the discriminant of $F$ is positive;
\item the $I^+(F)-I^-(F)$ is divisible by four.
\end{itemize}
This means that complex conjugation fixes the {\em class} of a maximal
isotropic subspace. We also have equivalence among:
\begin{itemize}
\item there is a maximal isotropic subspace $\bP^n \subset \bf{Q}$ defined
over $\bR$;
\item the signature of $F$ is $(n+1,n+1)$.  
\end{itemize}

Thus quadric $n$-folds 
$$Q \subset X \subset \bP^{2n+2}$$
defined over $\bR$ correspond to rulings of degenerate fibers 
$\cQ_t, t\in D(\bR)$ where $\cQ_t$ has signature $(n+1,n+1)$.
As in Example~\ref{exam:surface}, the corresponding $(+1)$-blocks
in the complex conjugation involution $\iota \in W(D_{2n+3})$
will be designated $\bf{1}$, in boldface.

\subsection{Analysis of the remaining even-dimensional isotopy classes}
\label{subsect:remaining}

We continue to assume that $X$ has even dimension $d=2n$, focusing on the isotopy classes without lines.

The cases 
$$
(2n+2-e,e,1)=(e,1,2n+2-e),  \quad 2\le e \le n+1
$$ 
have degeneracy
consisting of $2n+3$ real points.
The signatures of nonsingular members are
\begin{align*}
& (1,2n+2) \ldots (e+1,2n+2-e) (e,2n+3-e) (e+1,2n+2-e) \ldots \\
& (2n+1,2) (2n+2,1) \ldots (2n+2-e,e+1) (2n+3-e,e) \\
& (2n+2-e,e+1) \ldots (2,2n+1)
\end{align*}

For $(n+1,n+1,1)$ the resulting
signed permutation matrix is the diagonal matrix
\begin{equation} \label{eisn+1}
\diag(\underbrace{(-1)^n,\ldots,-1,{\mathbf 1}}_{n+1 \text{ terms }},
{\mathbf 1},\underbrace{{\mathbf 1},-1,\ldots,(-1)^n}_{n+1 \text{ terms }}),
\end{equation}
with the bolded $\bf{1}$'s corresponding to singular fibers with
signature $(n+1,n+1)$.
The number of $+1$'s 
$$a=\begin{cases} n+2 & \text{if $n$ odd }\\
                n+3 &  \text{if $n$ even.}
	\end{cases}
$$
For $e\neq n+1$ we have 
$$
\diag(\underbrace{(-1)^n,\ldots,(-1)^{n+e-1}}_{e \text{ terms}},(-1)^{n+e-1},
\underbrace{(-1)^{n+(2n+2-e)-1},\ldots,(-1)^n}_{2n+2-e \text{ terms}}).
$$
Note that $(-1)^{n+e-1}=(-1)^{n+(2n+2-e)-1}$ so the three middle terms are equal.
There is exactly one $\bf{1}$ corresponding to the singular fiber with
signature $(n+1,n+1)$.
The number of $+1$'s is given
\begin{equation} \label{countsigns}
a=\begin{cases} n & \text{if $n,e$ odd }\\
		  n+2 &	\text{if $n$ odd and $e$ even} \\
		  n+3 &	\text{if $n$ even and $e$ odd} \\
                  n+1 & \text{if $n,e$ even.}
	\end{cases}
\end{equation}

For case $(2n+1)$ the
signatures of nonsingular members are
\begin{align*}
(2,2n+1) \ldots (2n,3) (2n+1,2) 
(2n+2,3)\ldots (2,2n+1)
\end{align*}
The signed permutation matrix has one factor
$$\left( \begin{matrix} 0 & 1 \\
			1 & 0 \end{matrix} \right)
$$
and diagonal entries
$$((-1)^{n-1},\ldots,-1,1,-1,\ldots,(-1)^{n-1}).$$
The number of positive terms is
$$a = \begin{cases} n & \text{if $n$ odd} \\
		    n+1 & \text{if $n$ even.}
      \end{cases}
$$

\section{Application of the constructions}
\label{sect:application}
\subsection{Proof of Theorem~\ref{theo:main}}
Rationality is evident for isotopy classes
of varieties that contain a line defined over $\bR$. 
Propostion~\ref{prop:noline} enumerates the remaining cases
$$(5),(1,3,3),(1,2,4).$$
These are covered by the following propositions. 

\begin{prop}
Let $X\subset \bP^{2n+2}$ be a smooth complete intersection of two quadrics
over $\bR$ with invariant $(2n+1)$. Then $X$ is rational.
\end{prop}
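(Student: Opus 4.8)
The plan is to produce a conjugate pair of quadric $n$-folds meeting at a single real point and then to invoke Construction~II. The analysis of Section~\ref{subsect:remaining} shows that for the invariant $(2n+1)$ the complex conjugation involution on $H^{2n}(X,\bZ(n))$ contains exactly one block $\left(\begin{smallmatrix} 0 & 1 \\ 1 & 0\end{smallmatrix}\right)$, coming from the unique complex-conjugate pair of points of $D(\bC)\setminus D(\bR)$ (here $2c=2n+3-r=2$). By Proposition~\ref{prop:permquad} this block is the class-level shadow of a conjugate pair of quadric $n$-folds $Q,\bar Q\subset X$ defined over $\bC$, with $w_Q=L_i$ and $w_{\bar Q}=L_j$ interchanged by conjugation; geometrically $Q$ and $\bar Q$ are swept out by the maximal isotropic $(n+1)$-planes of the two conjugate singular fibers $\cQ_{t_0},\cQ_{\bar t_0}$.

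Next I would record the intersection theory. Since $L_i\neq -L_j$, the classes $w_Q$ and $w_{\bar Q}$ are not residual, so the computation of Section~\ref{subsect:quadrics} gives $Q\cdot\bar Q=1$. To upgrade this to a genuine transverse point I would argue that the linear spans $\Pi,\bar\Pi\cong\bP^{n+1}$ of $Q$ and $\bar Q$ inside $\bP^{2n+2}$ meet in the expected dimension $(n+1)+(n+1)-(2n+2)=0$, i.e.\ in a single point; then $Q\cap\bar Q=\Pi\cap\bar\Pi\cap X$ is at most one point, and together with $Q\cdot\bar Q=1$ this forces the intersection to be one reduced point $x$. Because conjugation interchanges $Q$ and $\bar Q$ it fixes $x$, so $x\in X(\bR)$; this simultaneously shows $X(\bR)\neq\emptyset$ and supplies the rational point and transversality hypotheses needed for Construction~II.

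Finally I would run Construction~II verbatim: projection from $x$ yields a cubic hypersurface $X'\subset\bP^{2n+1}$ in which the proper transforms of $Q$ and $\bar Q$ are disjoint, and the third-point construction gives a birational map $\mathbf{R}_{\bC/\bR}(Q')\dashrightarrow X'$. As $Q'$ is a smooth quadric over $\bC$, hence $\bC$-rational, its Weil restriction $\mathbf{R}_{\bC/\bR}(Q')$ is rational over $\bR$, and therefore so are $X'$ and $X$.

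I expect the main obstacle to be the transversality claim: proving that $Q$ and $\bar Q$ meet in a single reduced point rather than along a positive-dimensional locus. The cohomological identity $Q\cdot\bar Q=1$ does not by itself exclude excess intersection, so I would establish properness of the intersection directly, by showing that the isotropic $(n+1)$-planes $\Pi,\bar\Pi$ in the conjugate fibers $\cQ_{t_0},\cQ_{\bar t_0}$ are in general position, meeting in exactly one point of $\bP^{2n+2}$ that lies on $X$. Once properness is in hand the intersection number pins down the scheme structure, the point is automatically reduced and $\bR$-rational, and the remainder is the formal application of Construction~II.
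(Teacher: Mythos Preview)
Your approach is essentially the paper's: produce a conjugate pair of quadric $n$-folds with intersection number $1$ from the unique complex-conjugate pair in $D(\bC)\setminus D(\bR)$, then feed this into Construction~II. The cohomological setup and the invocation of Construction~II match the paper exactly.

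The one substantive difference is how transversality is obtained. You try to pin down a single reduced intersection point by arguing that the ambient $(n+1)$-planes $\Pi,\bar\Pi\subset\bP^{2n+2}$ meet in the expected dimension~$0$; as you correctly flag, the dimension count alone does not rule out $\Pi\cap\bar\Pi$ being a line or worse, and the isotropy constraints make ``general position'' nonobvious. The paper sidesteps this entirely: each cohomology class $[Q]$ is represented by a positive-dimensional family of quadric $n$-folds (a connected component of $\OGr(n+1,2n+2)$, see Section~\ref{subsect:quadrics}), so one simply chooses a \emph{suitably general} complex representative $Q$ in its class. Genericity then forces $Q\cap\bar Q$ to be proper, and the intersection number $Q\cdot\bar Q=1$ pins it down as a single reduced point, automatically real. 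This is both shorter and more robust than a direct verification for a fixed $\Pi$; you should replace your direct argument with this deformation-to-general-position step.
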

\begin{proof}
The analysis in Section~\ref{subsect:remaining} indicates that complex
conjugation exchanges two classes of quadric $n$-folds associated
to complex conjugate points in $D(\bC)\setminus D(\bR)$. Denote these
by $[Q]$ and $[\bar{Q}]$ -- recall from (\ref{notresidual}) that
$$[Q]\cdot [\bar{Q}]=1.$$
Choosing a suitably general complex representation $Q\subset X_{\bC}$,
the intersection $Q\cap \bar{Q}$ is proper. Then $Q\cap \bar{Q}$ 
consists of a single rational point of $X$ with multiplicity one.
In particular, the hypotheses of Construction II are satisfied.
\end{proof}

\begin{prop}
Let $X\subset \bP^{2n+2}$ be a smooth complete intersection of two quadrics
over $\bR$ with invariant 
$$(2n+2-e,e,1)=(e,1,2n+2-e),  \quad 2\le e \le n+1.$$
Assume that either $e$ is even or $e=n+1$. Then $X$ is rational.
\end{prop}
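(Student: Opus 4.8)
The plan is to realize this as an instance of one of the rationality constructions from Section~\ref{sect:rat-con}, by producing over $\bR$ a pair of geometric objects of the right type. The case hypothesis (either $e$ even, or $e=n+1$) should be precisely what guarantees the existence of suitable real quadric $n$-folds $Q\subset X$, via the signature criterion of Section~\ref{subsect:overR}: a real quadric $n$-fold corresponds to a singular fiber $\cQ_t$, $t\in D(\bR)$, of signature $(n+1,n+1)$, which in the involution $\iota\in W(D_{2n+3})$ shows up as a bolded $\mathbf{1}$-block. First I would read off, from the explicit diagonal signed-permutation matrices computed in Section~\ref{subsect:remaining} for the class $(2n+2-e,e,1)$, how many such $\mathbf{1}$-blocks are present and what their fixed/exchanged structure under $\iota$ is.

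The computation in Section~\ref{subsect:remaining} for $e\neq n+1$ records exactly one fiber of signature $(n+1,n+1)$, giving a single real quadric $n$-fold class fixed by conjugation; for $e=n+1$ equation~(\ref{eisn+1}) exhibits three adjacent $\mathbf{1}$-blocks. In either case I expect to find a conjugate pair of quadric $n$-folds $Q,\bar Q$ defined over a quadratic extension $K/\bR$. The goal is to arrange that $Q$ and $\bar Q$ meet transversally in a single point, so that Construction II applies directly: projecting from the $\bR$-rational intersection point yields a cubic hypersurface $X'\subset \bP^{2n+1}$ on which the proper transforms of $Q,\bar Q$ remain disjoint, and the third-point construction over $\mathbf{R}_{K/\bR}(Q)$ gives rationality. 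By~(\ref{notresidual}), two non-residual quadrics satisfy $[Q]\cdot[\bar Q]=1$, so a sufficiently general complex representative of the conjugate class meets $Q$ in a single point with multiplicity one, exactly as needed; here the parity assumption on $e$ should ensure the relevant pair of classes is non-residual rather than forced to be $w_{\bar Q}=-w_Q$.

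Alternatively, and perhaps more robustly, I would set up Construction III: using the real quadric $n$-fold $Q$ coming from the $(n+1,n+1)$-signature fiber, I project to obtain the quadric bundle $q:\Bl_Q(X)\to\bP^n$, and then seek a second half-dimensional subvariety $T$ over $\bR$ with $\deg(T)-T\cdot Q$ odd. The intersection numbers with the various weight classes are tabulated in Section~\ref{subsect:planes}, so checking the odd-degree multisection condition reduces to a lattice computation with the $w_P$, $w_Q$, paired according to the formulas there. The parity hypothesis on $e$ is what I expect to make this intersection number odd.

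The main obstacle will be the reality and transversality bookkeeping, not the birational geometry, which is already packaged in Constructions II and III. Specifically, the delicate point is to verify that for the class $(2n+2-e,e,1)$ with $e$ even (or $e=n+1$) the conjugate pair $Q,\bar Q$ genuinely meets in one reduced $\bR$-point and that this point can be taken smooth on $X$ — equivalently, that the general complex representatives in the two conjugate $\OGr(n+1,2n+2)$-components intersect properly. I anticipate needing the count ``$2n+3$ planes meeting $P_1$ in an $(n-1)$-plane'' and the pairing values $(-1)^n\tfrac{3}{4}$ (meeting in a point) from Section~\ref{subsect:planes}, transported to the quadric classes via Proposition~\ref{prop:permquad}, to pin down that the conjugation involution $\iota$ sends $[Q]$ to a class $[\bar Q]$ with $[Q]\cdot[\bar Q]=1$ rather than to $-[Q]$; the excluded odd values of $e$ are presumably exactly those where this fails and the real locus is either empty or the construction degenerates.
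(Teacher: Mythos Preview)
Your approach for $e=n+1$ via Construction~III is essentially the paper's: the three $\mathbf{1}$-blocks in \eqref{eisn+1} give three quadric $n$-folds $Q_1,Q_2,Q_3$ defined over $\bR$ with $Q_i\cdot Q_j=1$ for $i\neq j$, so projecting from $Q_1$ and using $Q_2$ as a multisection of odd degree $2-1=1$ finishes that case.

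For $e$ even with $e\neq n+1$, however, your primary plan (Construction~II with conjugate quadrics) cannot work. In these isotopy classes all $2n+3$ points of $D$ are real, so the involution $\iota$ is a \emph{diagonal} signed permutation: complex conjugation sends each quadric class $[Q]$ either to itself (a $+1$ block) or to its residual $h^n-[Q]$ (a $-1$ block). You never get $[\bar Q]$ in a different, non-residual class, so $[Q]\cdot[\bar Q]=1$ is impossible here; your diagnosis that ``odd $e$ is where this fails'' is misplaced, since it fails for every $e$ in this range. Your alternative via Construction~III is also stuck: there is only the single $\mathbf{1}$-block, hence one real quadric class, and any Galois-invariant cycle $T=Z\cup\bar Z$ built from conjugate halves will have even $\deg(T)-T\cdot Q$, so no odd multisection appears this way.

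The paper instead uses Construction~I with $n$-\emph{planes}, not quadrics. Since $\iota$ is diagonal, for any $n$-plane $P\subset X_{\bC}$ one has
\[
w_P\cdot w_{\bar P}=\tfrac{1}{4}\sum_{i=1}^{2n+3}\epsilon_i=\tfrac{1}{4}(a-2b),
\]
and formula~\eqref{countsigns} for $e$ even gives $a=n+2$ (if $n$ odd) or $a=n+1$ (if $n$ even), so $a-2b=\pm 1$ and $w_P\cdot w_{\bar P}=(-1)^{n+1}/4$. By the criterion in Section~\ref{subsect:planes} this is exactly the condition for $P$ and $\bar P$ to be disjoint, and Construction~I then yields rationality. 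This lattice computation with half-spin weights is the missing idea in your proposal.
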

\begin{proof}
Assume first that $e=n+1$. It follows from (\ref{eisn+1}) in
Section~\ref{subsect:remaining} that $X$ admits {\em three} classes
$$\begin{array}{c|cccc}
   & h^n & Q_1 & Q_2 & Q_3   \\
\hline
h^n  & 4 & 2           &    2  &  2     \\
Q_1 & 2 & 1+(-1)^n &    1  &  1     \\
Q_2 & 2 & 1          &  1+(-1)^n    &  1   \\
Q_3 & 2 & 1          &   1  &   1+(-1)^n  
\end{array}$$
with each $Q_i$ realized by a quadric $n$-fold defined over $\bR$.
Construction III gives rationality in this case.

Now assume that $e$ is even. The formula (\ref{countsigns}) shows that
the numbers of $+1$'s and $-1$'s appearing in the $\iota \in W(D_{2n+3})$
associated with complex conjugation
are as close as possible. If $n$ is
even then we have $n+1$ of the former and $n+2$ of the latter;
when $n$ is odd we have $n+2$ of the former and $n+1$ of the letter. 
Given an $n$-plane $P\subset X_{\bC}$, the formulas in Section~\ref{subsect:planes}
yield
$$w_P \cdot w_{\bar{P}} = (-1)^{n+1}/4$$
whence $P$ and $\bar{P}$ are disjoint in $X_{\bC}$. Thus we may apply
Construction I to conclude rationality.
\end{proof}

\begin{rema}
Remark~\ref{rema:Witt} implies that $X$ does not admit curves 
(or surfaces!) of odd
degree defined over $\bR$.
These would force the existence of lines defined over $\bR$,
which do not exist in this isotopy class.
\end{rema}

\subsection{Remaining six-dimensional case}
\label{subsect:135}

To settle the rationality of six-dimensional
complete intersections of two quadrics $X\subset \bP^6$,
there is one 
remaining case in the Krasnov classification:
$(1,3,5)$.

The sequence of signatures of nonsingular elements of $\{\cQ_t\}$ is:
\begin{align*} 
& (1,8) (2,7) (3,6) (4,5) (5,4) (6,3) (5,4) (6,3) (7,2) \\
& (8,1) (7,2) (6,3) (5,4) (4,5) (3,6) (4,5) (3,6) (2,7).
\end{align*}
The signed permutation is the diagonal matrix
$$\diag(-1,1,-1,{\bf{1}},-1,-1,-1,1,-1)$$
and the invariant cycles are:
$$\begin{array}{c|cccc}
   & h^3 & Q_1 & Q_2 & Q_3  \\
\hline
h^3 & 4 & 2 &    2  &  2     \\
Q_1 & 2 & 0 &    1  &  1    \\
Q_2 & 2 & 1 &    0  &  1   \\
Q_3 & 2 & 1 &    1  &  0   
\end{array}
$$
Here $Q_1$ corresponds to the singular fiber of 
signature $(4,4)$ and $Q_2$ and $Q_3$ correspond to the 
singular fibers of signatures $(2,6)$ and $(6,2)$.  
If $P\subset X$ is a three-plane then $w_P\cdot w_{\bar{P}}=-3/4$
and $P$ and $\bar{P}$ meet in a single point.

\section{Extensions and more general fields}
\label{sect:extensions}
We work over a field $k$ of characteristic zero. In this section, we give further examples 
of rationality constructions for $2n$-dimensional intersections
of two quadrics over $k$, relying on special subvarieties of dimension $n$.

\subsection{Dimension four: intersection computations}
Given $X \subset \bP^6$, a smooth complete intersection of two quadrics, 
we have
\begin{align*}
c_t(\cT_X) & \equiv  (1 + 7ht + 21h^2t^2)/(1+2ht)^2 \pmod{t^3} \\
         & \equiv  (1 + 7ht + 21h^2t^2)(1 - 2ht + 4h^2t^2)^2 \pmod{t^3} \\
         & \equiv  1 + 3h t + 5h^2 t^2 \pmod{t^3} 
\end{align*}
If $T \subset X$ is a smooth projective geometrically connected
surface then 
\begin{align*}
c_t(\cN_{T/X}) & = (1+3ht + 5h^2t^2)/(1 - K_T t + \chi(T) t^2) \\
             & = 1 + (3h+K_T)t + (5h^2 + 3hK_T + K_T^2 - \chi(T))t^2,
\end{align*}
where $\chi(T)$ is the topological Euler characteristic.
The expected dimension of the deformation space of $T$ in $X$ is
\begin{align*}
\chi(\cN_{T/X}) &= 2 - \tfrac{1}{2}(3h+K_T)K_T + \frac{1}{2}(3h+K_T)^2 \\
			& \hskip 5cm -(5h^2 + 3hK_T + K_T^2 - \chi(T)) \\
              &= 2\chi(\cO_T) + \tfrac{1}{2}(-h^2 - 3hK_T) - K_T^2 +  \chi(T)).
\end{align*}
For example,
\begin{itemize}
\item{if $T=\bP^2$ is embedded as a plane then $(T\cdot T)_X=2$ and
$T$ is rigid;}
\item{if $T$ is a quadric then $(T\cdot T)_X = 2$ and $T$ moves
in a three-parameter family;}
\item{if $T$ is a quartic scroll then $(T\cdot T)_X=6$ and
moves in a five-parameter family;}
\item{if $T$ is a sextic del Pezzo surface then $(T\cdot T)_X=12$
and $T$ moves in an eight-parameter family.} 
\end{itemize}

\subsection{Dimension four: surfaces with one apparent double point}
Recall that Construction V gives the rationality of fourfolds
admitting a surface with one apparent double point.
A classical result asserted by Severi -- see \cite[Th.~4.10]{CMR} for a
modern proof -- characterizes smooth surfaces 
$T\subset \bP^5$ with one apparent double
point, i.e., surfaces that acquire one singularity on generic projection into
$\bP^4$:
\begin{itemize}
\item{ $\deg(T)=4$: $T$ is a quartic scroll 
$$ T \simeq \bP(\cO_{\bP^1}(2)^2),  \quad \bP(\cO_{\bP^1}(1) \oplus \cO_{\bP^1}(3));$$
}
\item{ $\deg(T)=5$: $T$ is a quintic del Pezzo surface.}
\end{itemize}
Thus Construction V says that a smooth complete intersection of two quadrics $X\subset \bP^6$
is rational if it contains a quartic scroll, a quintic del Pezzo surface, or a sextic del Pezzo surface
with a rational point. Rationality always holds when there are positive-dimensional
subvarieties of odd degree (see Section~\ref{subsect:genrat}), so
we focus attention to the first case.

We seek criteria for the existence of a quartic scroll $T \subset X$,
defined over $k$. Clearly, the class $[T]$ must be Galois-invariant;
however, Galois-invariant classes need not be represented by 
cycles over $k$.

The intersection computations above imply that
$$
[T]=[Q_2] + [Q_3],
$$
where $Q_2$ and $Q_3$ represent quadric surfaces in $X$, defined
over the algebraic closure.
Assume that the class $[Q_2]+[Q_3]$ is Galois invariant and represents
algebraic cycles defined over the ground field.
We look for quartic scrolls $T\subset X$
with class $[T] = [Q_2]+[Q_3]$.

\begin{rema}
Over $k=\bR$, case $(1,2,4)$ has signed permutation 
$$\diag(1,-1,-1,-1,{\bf{1}},-1,1).$$
The intersection form on
the invariant part of $H^4(X_{\bC},\bZ)$ takes the form
$$\begin{array}{c|cccc}
   & h^2 & Q_1 & Q_2 & Q_3 \\
\hline
h^2 & 4 & 2 &    2  &  2 \\
Q_1 & 2 & 2 &    1  &  1 \\
Q_2 & 2 & 1 &    2  &  1 \\
Q_3 & 2 & 1 &    1  &  2
\end{array}$$
Here we assume $Q_1$ (and $h^2-Q_1$) are associated with the element
of $\{\cQ_t\}$ with signature $(3,3)$ and $Q_2$ and $Q_3$ are
contributed by the elements with signatures $(1,5)$ and $(5,1)$.
While $Q_1$ is definable over $\bR$, $Q_2$ and $Q_3$ are not definable
over $\bR$ (see Section~\ref{subsect:overR}).  

The requisite real cycles exist in $[T]=[Q_2]+[Q_3]$. This follows 
from the exact sequence in \cite[{\S}4]{CT15}, using the rationality
of $X$ over $\bR$ to ensure the vanishing of the unramified cohomology. 
It would be
interesting to deduce this directly using 
cohomological machinery \cite{Kahn96}.
However, we do not know whether such $X$
contain quartic scrolls, in general.
\end{rema}

Let $M$ denote the moduli space of quartic scrolls in a fixed
cohomology class on $X$. There is a morphism
$$
\begin{array}{rcl}
M &\rightarrow &(\bP^6)^{\vee}\\
T & \mapsto & \operatorname{span}(T) 
\end{array}
$$
assigning to each scroll the hyperplane it spans.

Hyperplane sections $Y=H\cap X$ containing such scrolls
are singular by the Lefschetz hyperplane theorem. 
Computations in {\tt Macaulay2} indicate that a generic such $Y$
has four ordinary singularities. If a 
complete intersection of two quadrics $Y\subset \bP^5$
contains a quartic scroll,
it contains two families of such scrolls, each parametrized by
$\bP^3$: These arise from residual intersections in quadrics in
$$I_T(2)/I_Y(2).$$
Thus the residual family has class 
$$2h^2-[T]=(h^2-[Q_2])+(h^2-[Q_3]).$$
The hyperplane sections of $X$ with four singularities 
should be para\-metrized by a reducible surface with distinguished component
$\Sigma \subset (\bP^6)^{\vee}$. 

We speculate that $\Sigma$ is a quartic del Pezzo surface,
constructed as follows: Consider the pencil of quadrics $\cQ_t$ 
defining $X$ and fix the pair of rank-six quadrics
$$\cQ_{t_2},\cQ_{t_3}$$
whose maximal isotropic subspaces sweep out $Q_2$ and $Q_3$.
Let $v_i\in \cQ_{t_i}$ denote the vertices and $\ell$ the line
they span, which is defined over $k$ even when $t_2$ and $t_3$
are conjugate over $k$. Projecting from $\ell$ gives a degree-four
cover
$$X \rightarrow \bP^4.$$
Geometrically, the covering group is the Klein four-group and the 
branch locus consists of two quadric hypersurfaces $Y_2,Y_3$
intersecting in a degree-four del Pezzo surface $S_{23}$.  
Is $\Sigma \simeq S_{23}$ over $k$?

\subsection{Dimension six: Threefolds with one apparent double point}
Construction V indicates that the existence of a threefold $W\subset X$
with one apparent double point yields rationality.
The following classification \cite{CMR} builds on constructions of Edge \cite{Edge}:
\begin{itemize}
\item{$\deg(W)=5$: a scroll in planes associated with two lines and twisted cubic,
or one line and two conics;}
\item{$\deg(W)=6$: an Edge variety constructed as a residual intersection
$$
Q \cap (\bP^1 \times \bP^3) = \Pi_1 \cup \Pi_2 \cup W,
$$
where $Q$ is a quadric hypersurface and
the $\Pi_i\simeq \bP^3$ are fibers of the Segre variety
under the first projection;}
\item{$\deg(W)=7$: an Edge variety constructed as a residual intersection
$$Q \cap (\bP^1 \times \bP^3) = \Pi \cup W$$
with the notation as above;}
\item{$\deg(W)=8$: a scroll in lines over $\bP^2$ of the form
$\bP(\cE)$ (one-dimensional quotients of $\cE$)
where $\cE$ is a rank-two vector bundle given as an extension
$$ 0 \ra \cO_{\bP^2} \ra \cE \ra \cI_{p_1,\ldots,p_8}(4) \ra 0 $$
for eight points $p_1,\ldots,p_8 \in \bP^2$,
no four collinear or seven on a conic.}
\end{itemize}

Given that rationality follows when there are positive-dimensional
subvarieties of odd degree (see Section~\ref{subsect:genrat}) we focus
on the varieties of even degree.

\subsection{Dimension six: Degree six Edge variety}
Let $W\subset \bP^7$ denote an Edge variety arising as follows:
Consider the Segre fourfold
$$
\bP^1 \times \bP^3 \subset \bP^7
$$ 
and take the
residual intersection to two copies of $\bP^3$
$$\{0,\infty \} \times \bP^3 \subset \bP^1 \times \bP^3$$
in a quadric hypersurface.  The resulting threefold 
$$
W\simeq \bP^1 \times \Sigma,
$$
where $\Sigma \subset \bP^3$ is a quadric hypersurface.  
Note that the ideal of $W \subset \bP^7$ is generated by 
nine quadratic forms.  Complete intersections of two quadrics
$$W \subset Y \subset \bP^7$$
depend on five parameters. A {\tt Magma} computation shows that
a generic such $Y$ has eight ordinary singularities. 

Suppose we have an embedding $W \hookrightarrow X$, where $X\subset \bP^8$
is a smooth complete intersection of two quadrics. For fixed $X$, the Hilbert scheme of such threefolds has dimension eight. Realizing 
$$
W\simeq \bP^1 \times \bP^1 \times \bP^1
$$ 
we have
\begin{eqnarray*}
c_1(\cN_{W/X})& =& 3(h_1+h_2+h_3),  \\
c_2(\cN_{W/X})& =& 8(h_1h_2+h_1h_3+h_2h_3),\\
c_3(\cN_{W/X})& =& 4h_1h_2h_3.
\end{eqnarray*}
The Riemann-Roch formula gives $\chi(N_{W/X})=8$.
Since $(W\cdot W)_X =4$, the primitive class 
$$([W] - \tfrac{3}{2} h^3)^2 = 4 - 18+9 = -5.$$

\begin{rema}
Suppose that $X$ is defined over $\bR$, and corresponds to the $(1,2,6)$ case,
with signed permutation matrix (see Section~\ref{subsect:remaining})
$$\diag(-1,1,1,1,-1,{\bf{1}},-1,1,-1).$$
The invariant cycles are:
$$\begin{array}{c|cccccc}
   & h^3 & Q_1 & Q_2 & Q_3 & Q_4 & Q_5  \\
\hline
h^3 & 4 & 2 &    2  &  2   & 2   & 2  \\
Q_1 & 2 & 0 &    1  &  1   & 1   & 1  \\
Q_2 & 2 & 1 &    0  &  1   & 1   & 1 \\
Q_3 & 2 & 1 &    1  &  0   & 1   & 1 \\
Q_4 & 2 & 1 &    1  &  1   & 0   & 1 \\
Q_5 & 2 & 1 &    1  &  1   & 1   & 0
\end{array}
$$
Here $Q_1$ corresponds to the singular fiber of
signature $(4,4)$ and the classes  $Q_2,\ldots,Q_5$ correspond to the
singular fibers of signatures $(2,6)$ and $(6,2)$.

The class
$$[Q_2]+[Q_3]+[Q_4]+[Q_5]-[Q_1]$$
has degree six and self-intersection four. Is it represented by cycles defined over
$\bR$? Does it admit an Edge variety of degree six over $\bR$?
Over more general $k$ where the requisite cycles exist?
\end{rema}

\subsection{Dimension six: Degree eight variety}
Let $\cE$ be a stable rank-two vector bundle on $\bP^2$ with invariants
$c_1(\cE)=4L$ and $c_2(\cE)=8L^2$. Note that $\Gamma(\cE)$ has dimension
eight, giving an inclusion
$$V:=\bP(\cE^{\vee}) \subset \bP^7.$$
We have a tautological exact sequence
$$
0 \ra \cO_V(-\xi) \ra \cE^{\vee}_V \ra Q \ra 0,
$$
whence
$$0 \ra Q(\xi)  \ra T_V \ra T_{\bP^2} \ra 0.$$
Thus we have the following
\begin{align*} 
\xi^2 - 4L\xi + 8L^2 &=0 \\
c(Q(\xi))&= 1 + (2\xi-4L) + (\xi^2 - 4L\xi+ 8L^2), \\
c(\cT_V)&= (1+3L+3L^2)(c(Q(\xi)) \\
      &= 1 + (2\xi-L) + (\xi^2-4L\xi+8L^2 + 6\xi L-12L^2 + 3L^2)
\end{align*}
and we find
\begin{eqnarray*}
c_1(\cN_{V/X}) & = & 3\xi +L,\\
c_2(\cN_{V/X})  & = & 5\xi^2 - \xi L +2L^2, \\
c_3(\cN_{V/X})   & = & 3\xi^3 -3\xi^2 L + 2L^2\xi -9L^3.
\end{eqnarray*}
Note that 
$$
\deg(L^3)=0, \quad \deg(L^2\xi)=1,\quad  \deg(L\xi^2)=4, \text{ and } 
\deg(\xi^3)=8,
$$ 
so we conclude that
$$(V\cdot V)_X =14.$$
The primitive class $[V]-2h^3$ has self-intersection $14-4\cdot 8+16=-2$
which means that
$$
[V]=h^3 + [Q_2] + [Q_3], \quad (Q_2\cdot Q_3)=1,
$$
where $Q_2$ and $Q_3$ are classes of quadric threefolds in $X$,
defined over the algebraic closure. (Up to the action of the Weyl group $W(D_9)$ this
is the only possibility.)

Returning to the only remaining case in dimension six where
rationality over $\bR$ remains open
(see Section~\ref{subsect:135}):
\begin{ques}
Let $X \subset \bP^8$ be a smooth complete intersection of two quadrics
over $\bR$ in isotopy class $(1,3,5)$.
Which classes of codimension-three cycles $X$ are realized
over $\bR$?
Are there varieties with one apparent double point, defined over $\bR$, representing these
classes? 
\end{ques}

\bibliographystyle{alpha}
\bibliography{cases}

\end{document}